\newcommand\munderbar[1]{\underaccent{\bar}{#1}}
\title{A three-field based optimization formulation for flow simulations in networks of fractures on non conforming meshes
\thanks{This work was supported by the MIUR project ``Dipartimenti di Eccellenza 2018-2022'' (CUP E11G18000350001), PRIN project "Virtual Element Methods: Analysis and Applications" (201744KLJL\_004) and by INdAM-GNCS.}
}
\author{Stefano Berrone\thanks{Dipartimento di Scienze Matematiche, Politecnico di Torino, Torino, IT. Member of INdAM-GNCS group. \email{stefano.berrone@polito.it,  denise.grappein@polito.it, stefano.scialo@polito.it}}
\and
Denise Grappein \footnotemark[2]
\and
Sandra Pieraccini\thanks{Dipartimento di Ingegneria Meccanica e Aerospaziale, Politecnico di Torino, Torino, IT. Member of INdAM-GNCS group. \email{sandra.pieraccini@polito.it}} 
\and
Stefano Scial\`o \footnotemark[2]}
\begin{document}

\maketitle

\begin{center}
\today
\end{center}

\begin{abstract}
A new numerical scheme is proposed for flow computation in complex discrete fracture networks. The method is based on a three-field formulation of the Darcy law for the description of the hydraulic head on the fractures and uses a cost functional to enforce the required coupling condition at fracture intersections. The resulting method can handle non conforming meshes, independently built on each geometrical object of the computational domain, and ensures local mass conservation properties at fracture intersections. An iterative solver is devised for the method, ready for parallel implementation on parallel computing architectures.
\end{abstract}

\begin{keywords}
Discrete Fracture Networks, Darcy law, PDE-constrained optimization, non conforming mesh, extended finite elements.
\end{keywords}

\begin{AMS}
65N30, 65N15, 65N50, 65J15
\end{AMS}

\section{Introduction}
The present work proposes a new numerical approach for flow simulations in fracture networks, described by means of the Discrete Fracture Network (DFN) model. DFNs are sets of intersecting planar polygons arbitrarily oriented in the three dimensional space, representing fractures in underground rock formations, and are typically generated starting from probability distribution functions on hydraulic and geological soil properties \cite{CLMTBDFP,DeDreuzyEtAlii01,Dowd2007}. DFN models, since providing an explicit representation of fractures, are a viable alternative to homogenization based approaches \cite{Berkowitz2002}, when the presence of a network of fractures sensibly affects relevant flow characteristics. In fact flow directionality and preferential paths might not be correctly accounted for by using homogenized properties for rocks and fractures \cite{Fidelibus2009,Sahimi2011}. When fracture hydraulic transmissivity is much higher than rock transmissivity, the influence of the porous rock matrix can be neglected, with minor impact on the prediction of the flow. 
%
% In this case, the network of fractures may sensibly affect relevant flow characteristics, as flow directionality and preferential paths and DFN models, which explicitly represent fractures, might become a preferable alternative to Equivalent Porous Medium (EPM) approaches, based instead on homogenization, which might not correctly reproduce such characteristics . 

A major drawback for DFN flow simulations is related to the geometrical complexity and size of the resulting computational domains, which might count a large number of fractures, with dimensions ranging from centimeters to kilometers and forming an intricate network of intersections, where suitable conditions need to be enforced to couple the solution on the intersecting fractures. This complex multi-scale geometrical nature of DFN domains significantly limits the applicability of conventional numerical simulation tools which rely on mesh conformity to enforce interface conditions, as it is often a very difficult task to generate good quality conforming meshes of realistic DFNs, even introducing a large number of unknowns, \cite{deDreuzy13,Antonietti2016,FKS}.

Recently, many different approaches have been suggested to overcome such a difficulty. A possible strategy consists in a dimensional reduction of the problem: in \cite{DF99,BODIN2007} the DFN is replaced by a set of one-dimensional channels or pipes resembling the connections among fractures in the network; in \cite{NOETJCP15} the problems on the fractures are re-written in terms of the 1D interface unknowns only, whereas in \cite{Karra2018,HOBE2018,HymanMLNature} DFNs are analyzed using graph theory tools. 
Some authors propose new efficient meshing strategies for complex networks, aiming at obtaining a conforming mesh with minor modifications of network geometry \cite{Fourno2019}, or replacing hard-to-mesh configurations with stochastically equivalent analogues, which are easier to mesh \cite{Gable2014,Gable2015bTransp}. Discretization methods capable of handling polygonal meshes are also suggested as effective strategies to obtain conforming meshes of complex networks: the use of Virtual Elements is proposed in \cite{BBBPS,BBSmixed,Fumagalli2018,BBD}, Mimetic Finite Differences in \cite{Antonietti2016}  and Hybrid High-Order Methods in \cite{Chave2018}, as some relevant examples. Other authors suggest the use of mortaring techniques to partially alleviate the conformity requirement at fracture intersections, \cite{Vohralik2007,Pichot2012,Pichot2014}.

The present work takes inspiration from a different approach, proposed in \cite{BPSa,BPSd,BPSe}, which relies on numerical optimization to enforce interface conditions, without requiring any mesh conformity at fracture intersections, and thus completely overcoming any problem related to mesh generation. A cost functional, expressing the error in fulfilling interface conditions, is minimized constrained by a set of partial differential equations written on each fracture. The method is robust to complex geometries and highly efficient thanks to its predisposition to parallel implementation \cite{BBV2019,BSV}. Here, while keeping a similar optimization framework, a new formulation is proposed for the constraint equations, based on the \textit{three-field} formulation suggested in \cite{Brezzi}. The resulting approach retains the capability of dealing with non-conforming meshes and the predisposition to parallel implementation given by the optimization formulation. It is now based on a novel mono-objective functional definition and has intrinsic properties of local mass-conservation across traces.

The manuscript is organized as follows: in Section~\ref{ContModel} a three-field formulation of the Darcy problem in fracture networks is written and recast into a PDE-constrained formulation suitable for discretization on non conforming meshes. The resulting discrete approach is shown in Section~\ref{DiscreteModel}. Section~\ref{DiscrWellPosedness} reports well posedness results for the discrete problem, Section~\ref{prob_solve} the algorithm proposed to compute the numerical solution and Section~\ref{NumRes} describes some numerical examples. Concluding remarks are finally proposed in Section~\ref{conclusions}.

\section{Continuous model}
\label{ContModel}
\newcommand{\Lrm}{\mathrm{L}}
\newcommand{\Hrm}{\mathrm{H}}
The present Section is devoted to the presentation of a three-field formulation for the Darcy problem: in the first subsection, a classical formulation is proposed, introducing the equations and the coupling conditions at the interfaces, whereas, in the second subsection a novel optimization formulation is described.
In the following, $\Lrm^2(\omega)$ is the Hilbert space on $\omega$ of square integrable functions, and $\Hrm^1(\omega)$ refers to the classical Sobolev space of order one on $\omega$; inner products in a function space $V$ are denoted by $\left(\cdot,\cdot\right)_{V}$, whereas $\left\langle\cdot,\cdot\right\rangle_{V,V'}$ is a duality pairing between spaces $V$ and $V'$. Notation $v_{|_\gamma}$ denotes the trace on $\gamma\subseteq\partial\omega$ of a function $v\in \Hrm^1(\omega)$.
\subsection{Variational and Three-Field formulation}
\newcommand{\jumpS}[1]{\mathop{ \left[\!\!\left[#1\right]\!\!\right] }\nolimits}

Let us consider a connected three-dimensional fracture network $\Omega$ given by the union of open planar fractures $\left\lbrace F_i\right\rbrace _{i \in \mathcal{J}}$, $\mathcal{J}=(0,..., I)$, and surrounded by an impervious rock matrix. This means that the flow, modeled by the Darcy law, only occurs along fractures and through fracture intersections. Given two fractures, their closure intersection is called a \textit{trace}, denoted by $S_m$, $m\in \mathcal{M}=\left\lbrace 1, ..., M\right\rbrace$. The set of all traces in $\Omega$ is $\mathcal{S}$, whereas, for $i\in\mathcal{J}$, the subset $\mathcal{S}_i\subset \mathcal{S}$ contains the traces belonging to the $i$-th fracture; the indexes of traces $S_m\in\mathcal{S}_i$ are collected in the index-set $\mathcal{M}_i$. For each $m\in\mathcal{M}$, the couple $I_{S_m}=\left\lbrace \munderbar{i},\bar{i}\right\rbrace $ denotes the indexes of the two fractures intersecting along $S_m$, with $\munderbar{i}<\bar{i}$. 
%The fracture $F_{\munderbar{i}}$ is the one for which positive fluxes are assumed to leave the fracture, while for $F_{\bar{i}}$ the same fluxes are entering. Usually $\munderbar{i}$ is the lower fracture index in $I_{S_m}$. 
The boundary of $\Omega$, denoted by $\partial \Omega$, is split into a Dirichlet part $\Gamma_D$ and a Neumann part $\Gamma_N$, such that $\partial \Omega=\Gamma_D\cup \Gamma_N$, $\Gamma_D \cap \Gamma_N=\emptyset$ and $\Gamma_D \neq \emptyset$. The same holds for fracture boundary $\partial F_i$, having a Dirichlet part $\Gamma_{iD}=\Gamma_D\cap \partial F_i$ and a Neumann part $\Gamma_{iN}=\Gamma_N\cap \partial F_i$. Dirichlet and Neumann boundary conditions on $\partial \Omega$ are expressed by functions $G_D$ and $G_N$, respectively, and their restrictions to $\partial F_i$ are denoted by $G_{iD}$ and $G_{iN}$. 

We are interested in the computation of the hydraulic head $H_i$ on each fracture $F_i\subset \Omega$, which is given by the sum of pressure and elevation. To this end, let us set, on each fracture the following function spaces:
\begin{align*}
&V_i=H_0^1(F_i)=\left\lbrace v \in H^1(F_i):v_{|\Gamma_{iD}}=0\right\rbrace, \quad \forall i \in \mathcal{J},\\
&V_i^D=H_D^1(F_i)=\left\lbrace v \in H^1(F_i):v_{|\Gamma_{iD}}=G_{iD}\right\rbrace , \quad \forall  i \in \mathcal{J},
\end{align*}
and, for each trace $S_m \in \mathcal{S}$, the space $\mathcal{U}^m:=H^{-\frac{1}{2}}(S_m)$ and its dual ${\mathcal{U}^m}'$.
Assuming for the moment that $\Gamma_{iD}\neq \emptyset$ , $\forall i \in \mathcal{J}$, the variational problem describing the distribution of $H$ in $\Omega$ takes the form: for all $i\in\mathcal{J}$, find $H_i=H_i^0+\mathcal{R}_iG_{iD}$ with $\mathcal{R}_iG_{iD}\in V_i^D$ a lifting of the Dirichlet boundary condition and $H_i^0 \in V_i$ such that:
\begin{align}
(\bm{K}_i&\nabla H_i^0,\nabla v_i)_{V_i}=(Q_i,v_i)_{V_i}+\sum_{m \in \mathcal{M}_i}\left\langle \jumpS{\frac{\partial H_i^0}{\partial \hat{\nu}_i^m}},v_{|_{S_m}} \right\rangle_{\mathcal{U}^m,{\mathcal{U}^m}'}+\label{prob_con_prodotti}\\ &\quad +\left\langle G_{iN},v_{|_{\Gamma_{iN}}}\right\rangle_{H^{-\frac{1}{2}}(\Gamma_{iN}),H^{\frac{1}{2}}(\Gamma_{iN})}-(\bm{K}_i\nabla\mathcal{R}_i G_{iD},\nabla v_i)_{V_i} \quad \forall v_i \in V_i,\nonumber
\end{align}
where $\bm{K}_i$ is a uniformly positive definite tensor representing fracture transmissivity, $Q_i$ a  known source term, $\frac{\partial H_i^0}{\partial \hat{\nu}_i^m}=\hat{n}_i^m\cdot \bm{K}_i\nabla H_i^0$ is the hydraulic head co-normal derivative along direction $\hat{n}_i^m$ normal to $S_m \in \mathcal{S}_i$ and $\jumpS{\frac{\partial H_i^0}{\partial \hat{\nu}_i^m}}$ denotes the jump of $\frac{\partial H_i^0}{\partial \hat{\nu}_i^m}$ across $S_m$.

Coupling conditions at the traces for problems on intersecting fractures are the continuity of the hydraulic head and flux conservation, expressed by:
\begin{align}
&{H_{\bar{i}}}_{|_{S_m}}-{H_{\munderbar{i}}}_{|_{S_m}}=0 \qquad \qquad ~~\text{for } \bar{i},\munderbar{i} \in I_{S_m} \quad \forall m \in \mathcal{M} \label{cond_carichi}\\
&\jumpS{\frac{\partial H_{\bar{i}}}{\partial \hat{\nu}_{\bar{i}}^m}}+\jumpS{\frac{\partial H_{\munderbar{i}}}{\partial \hat{\nu}_{\munderbar{i}}^m}}=0  \quad \quad \text{for } \bar{i},\munderbar{i} \in I_{S_m} \quad \forall m \in \mathcal{M}.
\label{cond_flussi}
\end{align}

Let us introduce on each trace $S_m \in \mathcal{S}$ the space $\mathcal{H}^m=H^{\frac{1}{2}}(S_m)$ and its dual ${\mathcal{H}^m}'$, and the quantities $\Psi^m \in \mathcal{H}^m$ and $\Lambda^m \in \mathcal{U}^m$, representing the unknown exact value of the hydraulic head on $S_m$ and of the flux jump across $S_m$, respectively. Coupling condition \eqref{cond_carichi} can be then re-written in a weak form as: $\forall m \in \mathcal{M}$, $\{\munderbar{i},\bar{i}\}=I_{S_m}$
\begin{equation}                             
\begin{aligned}                                        
&\left\langle {H_{\munderbar{i}}}_{|_{S_m}}-\Psi^m,\mu_m\right\rangle_{{\mathcal{H}^m},{\mathcal{H}^m}'} =0 & \forall \mu_m \in {\mathcal{H}^m}',\\
&\left\langle {H_{\bar{i}}}_{|_{S_m}}-\Psi^m,\mu_m\right\rangle_{{\mathcal{H}^m},{\mathcal{H}^m}'} =0 & \forall \mu_m \in {\mathcal{H}^m}',
\end{aligned} \label{condpsi}
\end{equation}
and condition \eqref{cond_flussi} as: $\forall m \in \mathcal{M}$, $\{\munderbar{i},\bar{i}\}=I_{S_m}$
\begin{equation}                             
\begin{aligned}                                 
&\left\langle \jumpS{\frac{\partial H_{\munderbar{i}}}{\partial \hat{\nu}_{\munderbar{i}}^m}}-\Lambda^m,\rho_m\right\rangle_{{\mathcal{U}^m},{\mathcal{U}^m}'} =0 & \forall \rho_m \in {\mathcal{U}^m}',\\
&\left\langle \jumpS{\frac{\partial H_{\bar{i}}}{\partial \hat{\nu}_{\bar{i}}^m}}+\Lambda^m,\rho_m\right\rangle_{{\mathcal{U}^m},{\mathcal{U}^m}'} =0 & \forall \rho_m \in {\mathcal{U}^m}'.
\end{aligned} \label{condlambda}
\end{equation}
Assuming, for the sake of simplicity, that homogenous Dirichlet and Neumann boundary conditions are imposed on $\partial F_i$, $\forall i \in \mathcal{J}$, the Three-Field formulation \cite{Brezzi} of problem \eqref{prob_con_prodotti} takes the form: find $(H_i,\Lambda^m,\Psi^m)\in V_i\times\mathcal{H}^m\times  \mathcal{U}^m$, for all $i\in\mathcal{J}$ and for all $m\in\mathcal{M}$ such that:
\begin{align}
(\bm{K}_i\nabla H_i,\nabla v_i)_{V_i}
&-\sum_{m\in\mathcal{M}_i}\left\langle (-1)^{\chi_{i}^m}\Lambda^m,{v_i}_{|_{S_m}}\right\rangle_{\mathcal{U}^{m},{\mathcal{U}^{m}}'}=(Q_i,v_i)_{V_i}, \qquad \forall v_i\in V_i \label{eqmod1} \\
&\sum_{j\in I_{S_m}}\left\langle {H_j}_{|_{S_m}}-\Psi^m,\mu_m\right\rangle_{{\mathcal{H}^m},{\mathcal{H}^m}'}=0 \qquad\forall \mu_m \in {\mathcal{H}^m}',
\end{align}
with, for $i\in\mathcal{J}$, $m\in\mathcal{M}$, $\chi_{i}^m=1$ if $i=\max(I_{S_m})$ and zero otherwise. 
For a given fracture $F_i$, the second term in equation \eqref{eqmod1} represents the flux entering the fracture through its traces. On each trace $S_m$, $m\in\mathcal{M}$, the flux $\Lambda^m$ is considered positive for fracture $F_{\munderbar{i}}$ and negative for fracture $F_{\bar{i}}$, ensuring the conservation condition. In order to remove the assumption of having a non empty portion of the Dirichlet boundary on each fracture, equation \eqref{eqmod1} can be modified,  as follows: on each fracture $F_i$, $i\in\mathcal{J}$ and on each trace $S_m$, $m\in\mathcal{M}$ find $(H_i,\Lambda^m,\Psi^m)\in V_i\times\mathcal{H}^m\times  \mathcal{U}^m$ such that
\begin{align}
(\bm{K}_i\nabla H_i,\nabla v_i)_{V_i}
&+\alpha\sum_{m\in\mathcal{M}_i}\left(\left({H_i}_{|_{S_m}},{v_i}_{|_{S_m}}\right)_{\mathcal{H}^{m}}
-\left\langle (-1)^{\chi_i^m}\Lambda^m,{v_i}_{|_{S_m}}\right\rangle_{\mathcal{U}^{m},{\mathcal{U}^{m}}'}\right)= \label{eqmod}\\ 
&=\alpha\sum_{m\in\mathcal{M}_i}\left(\Psi^m,{v_i}_{|_{S_m}}\right)_{\mathcal{H}^{m}}+(Q_i,v_i)_{V_i}, \qquad \forall v_i\in V_i \nonumber\\
&\sum_{j\in I_{S_m}}\left\langle {H_j}_{|_{S_m}}-\Psi^m,\mu_m\right\rangle_{{\mathcal{H}^m},{\mathcal{H}^m}'}=0 \qquad\forall \mu_m \in {\mathcal{H}^m}'.\label{split_h}
\end{align}
which, for $\alpha>0$, ensures well posedness of \eqref{eqmod} even if $\Gamma_{iD}=\emptyset$ for all but one fracture.

\subsection{PDE-constrained optimization formulation}
The discretization of the continuity condition \eqref{split_h} would require some sort of mesh conformity at the traces and a discrete inf-sup condition to have well posedness of \eqref{eqmod}-\eqref{split_h}. We want, instead to rewrite problem \eqref{eqmod}-\eqref{split_h} in a new formulation allowing a discretization on arbitrary meshes, from which a viable and robust numerical scheme can be derived, independently of DFN geometrical complexity.
At this aim we transform \eqref{eqmod}-\eqref{split_h} in a PDE-constrained optimization problem, in which a cost functional is introduced in order to enforce the continuity condition on traces. For each fracture $F_i$ and each trace $S_m \in \mathcal{S}_i$ let us introduce the trace operator $\Gamma_i^m:~V_i\rightarrow\mathcal{H}^m$, $\Gamma_i^m(v_i)={v_i}_{|_{S_m}} ~\forall v \in V_i$,
and the cost functional
\begin{equation}
J_i^m(\Lambda^m,\Psi^m)= ||\Gamma_i^m{H_i}(\Lambda^m,\Psi^m)-\Psi^m||^2_{\mathcal{H}^S }, \label{Jim}
\end{equation} 
which expresses the error in the fulfillment of continuity at trace $S_m$.
Let us then introduce, for each fracture $F_i$, $i \in \mathcal{J}$, the spaces
\begin{equation*} 
\mathcal{H}^{\mathcal{M}_i}=\prod_{m \in \mathcal{M}_i}\mathcal{H}^m, \qquad 
\mathcal{U}^{\mathcal{M}_i}=\prod_{m \in \mathcal{M}_i}\mathcal{U}^{m}
\label{spaziMi}
\end{equation*}
and the variables
\begin{equation*}
\Psi_i=\prod_{m\in \mathcal{M}_i} \limits \Psi^m \in \mathcal{H}^{\mathcal{M}_i} , \qquad 
\Lambda_i=\prod_{m \in \mathcal{M}_i}\limits \Lambda^m \in \mathcal{U}^{\mathcal{M}_i}.
\end{equation*}
Setting $\Gamma_i=\prod_{m \in \mathcal{M}_i}\limits \Gamma_i^m$, $\Gamma_i:~V_i\rightarrow\mathcal{H}^{\mathcal{M}_i}$, we define the linear bounded operators 
$A_i:V_i\rightarrow V_i'$, $\mathcal{B}_i:\mathcal{U}^{\mathcal{M}_i}\rightarrow V_i'$, $\mathcal{C}_i:\mathcal{H}^{\mathcal{M}_i}\rightarrow V_i'$ such that
\begin{align}
&\left\langle A_iH_i,v_i\right\rangle _{V_i',V_i}=(\bm{K}_i\nabla H_i,\nabla v_i)_{V_i}+\alpha({\Gamma_iH_i},{\Gamma_iv_i})_{\mathcal{H}^{\mathcal{M}_i}}\quad &v_i \in V_i \label{defA}\\
&\left\langle \mathcal{B}_i\Lambda_i, v_i\right\rangle _{V_i',V_i}=\left\langle (-1)^{\chi_i^m}\Lambda_i,{\Gamma_iv_i}\right\rangle_{\mathcal{U}^{\mathcal{M}_i},{\mathcal{U}^{\mathcal{M}_i}}'} \quad &v_i \in V_i \label{defB}\\
&\left\langle \mathcal{C}_i\Psi_i, v_i\right\rangle _{V_i',V_i}=\alpha(\Psi_i,{\Gamma_iv_i})_{\mathcal{H}^{\mathcal{M}_i}} \quad &v_i \in V_i, \label{defC}
\end{align}
and their adjoints
$A_i^*:V_i\rightarrow V_i'$, $\mathcal{B}_i^*:V_i\rightarrow {\mathcal{U}^{\mathcal{M}_i}}'$ and $ \mathcal{C}_i^*:V_i\rightarrow {\mathcal{H}^{\mathcal{M}_i}}'$.
Defining, then, the spaces 
$$\mathcal{H}=\prod_{m \in \mathcal{M}}\limits\mathcal{H}^m \qquad \mathcal{U}=\prod_{m \in \mathcal{M}}\limits\mathcal{U}^m$$
and the global control variables
$$\Psi=\prod_{m \in \mathcal{M}}\Psi^m\in \mathcal{H} \qquad \Lambda=\prod_{m \in \mathcal{M}}\Lambda^m \in \mathcal{U},$$
a global functional can be introduced as:
\begin{equation}
J(\Lambda,\Psi)=\sum_{i \in \mathcal{J}} J_i(\Lambda_i,\Psi_i)=\sum_{i \in \mathcal{J}}\sum_{m \in \mathcal{M}_i} J_i^m(\Lambda^m,\Psi^m).\label{JJ}
\end{equation}
and problem (\ref{eqmod})-(\ref{split_h}) can be written in the form
\begin{align}
&\min_{(\Lambda,\Psi)} J(\Lambda,\Psi) \text{ subject to } \label{minJm}\\
A_iH_i-&\mathcal{B}_i\Lambda_i-\mathcal{C}_i\Psi_i=Q_i\qquad \forall i \in \mathcal{J} \nonumber.
\end{align}
The following result characterizes the solution to (\ref{minJm}).
\begin{proposition}
The optimal control ($\Lambda,\Psi$) providing the solution to (\ref{minJm}) satisfies, $\forall i \in\mathcal{J}$
\begin{align}
&\Theta_{\mathcal{U}^{\mathcal{M}_i}}^{-1}{\mathcal{B}_i}^*P_i=0 \label{stat_lambda}\\
&\Theta_{\mathcal{H}^{\mathcal{M}_i}}^{-1}{\mathcal{C}_i}^*P_i-\Gamma_iH_i(\Lambda_i,\Psi_i)+\Psi_i=0 \label{stat_psi}
\end{align}
where $P_i\in V_i$ is the solution of 
\begin{equation}
{A_i}^*P_i={\Gamma_i}^*\Theta_{\mathcal{H}^{\mathcal{M}_i}}({\Gamma_i}^*H_i(\Lambda_i,\Psi_i)-\Psi_i) \label{def_p}
\end{equation} and $\Theta_{\mathcal{H}^{\mathcal{M}_i}}:\mathcal{H}^{\mathcal{M}_i}\rightarrow {\mathcal{H}^{\mathcal{M}_i}}'$ and $\Theta_{\mathcal{U}^{\mathcal{M}_i}}:\mathcal{U}^{\mathcal{M}_i}\rightarrow {\mathcal{U}^{\mathcal{M}_i}}'$ are Riesz isomorphisms.
\label{prop1}
\end{proposition}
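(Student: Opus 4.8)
The plan is to read \eqref{minJm} as a linear--quadratic PDE-constrained optimization problem and to characterize its minimizer by first-order optimality conditions, eliminating the state sensitivity through the adjoint (Lagrange multiplier) technique.

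First I would use the well-posedness of the constraint. For $\alpha>0$ the bilinear form defining $A_i$ in \eqref{defA} is the $\bm{K}_i$-weighted $\Hrm^1$ form augmented by the nonnegative trace term $\alpha(\Gamma_i\cdot,\Gamma_i\cdot)_{\mathcal{H}^{\mathcal{M}_i}}$, hence coercive on $V_i$, so $A_i\colon V_i\to V_i'$ is an isomorphism. I can therefore solve the constraint explicitly, $H_i=A_i^{-1}(\mathcal{B}_i\Lambda_i+\mathcal{C}_i\Psi_i+Q_i)$, which shows that $H_i$ depends affinely on the controls and that the reduced functional $J$ in \eqref{JJ} is a convex quadratic of $(\Lambda,\Psi)$. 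Convexity then guarantees that a control is optimal if and only if the G\^ateaux derivative of $J$ vanishes in every direction, so the entire statement reduces to computing $DJ$ and rewriting it in the form \eqref{stat_lambda}--\eqref{stat_psi}.

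Next I would differentiate fracture by fracture. Perturbing $(\Lambda_i,\Psi_i)$ in the direction $(\delta\Lambda_i,\delta\Psi_i)$ and differentiating the constraint gives the sensitivity $\delta H_i=A_i^{-1}(\mathcal{B}_i\delta\Lambda_i+\mathcal{C}_i\delta\Psi_i)$, and each term of \eqref{JJ}--\eqref{Jim} contributes $DJ_i=2\,(\Gamma_iH_i-\Psi_i,\ \Gamma_i\delta H_i-\delta\Psi_i)_{\mathcal{H}^{\mathcal{M}_i}}$. The crux is to remove the implicit term $\Gamma_i\delta H_i$. Representing the inner product through the Riesz map $\Theta_{\mathcal{H}^{\mathcal{M}_i}}$ and transposing $\Gamma_i$ recasts $(\Gamma_iH_i-\Psi_i,\Gamma_i\delta H_i)_{\mathcal{H}^{\mathcal{M}_i}}$ as $\langle\Gamma_i^*\Theta_{\mathcal{H}^{\mathcal{M}_i}}(\Gamma_iH_i-\Psi_i),\delta H_i\rangle_{V_i',V_i}$. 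I then recognize the right-hand side of the adjoint equation \eqref{def_p} and substitute $\Gamma_i^*\Theta_{\mathcal{H}^{\mathcal{M}_i}}(\Gamma_iH_i-\Psi_i)=A_i^*P_i$; using the definition of the adjoint, $\langle A_i^*P_i,\delta H_i\rangle=\langle A_i\delta H_i,P_i\rangle$, and inserting $A_i\delta H_i=\mathcal{B}_i\delta\Lambda_i+\mathcal{C}_i\delta\Psi_i$ together with \eqref{defB}--\eqref{defC} produces the pairings $\langle\mathcal{B}_i^*P_i,\delta\Lambda_i\rangle$ and $\langle\mathcal{C}_i^*P_i,\delta\Psi_i\rangle$, so that $\delta H_i$ is eliminated.

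Finally I would collect the terms and invoke arbitrariness of the directions. The per-fracture derivative then reads $DJ_i=2\langle\mathcal{B}_i^*P_i,\delta\Lambda_i\rangle+2\langle\mathcal{C}_i^*P_i-\Theta_{\mathcal{H}^{\mathcal{M}_i}}(\Gamma_iH_i-\Psi_i),\delta\Psi_i\rangle$, the common factor $2$ from differentiating the square cancelling in the stationarity equation, so that no rescaling of $P_i$ in \eqref{def_p} is needed. Requiring $DJ=\sum_{i\in\mathcal{J}}DJ_i$ to vanish for every admissible perturbation yields $\mathcal{B}_i^*P_i=0$ in ${\mathcal{U}^{\mathcal{M}_i}}'$ and $\mathcal{C}_i^*P_i-\Theta_{\mathcal{H}^{\mathcal{M}_i}}(\Gamma_iH_i-\Psi_i)=0$ in ${\mathcal{H}^{\mathcal{M}_i}}'$; applying the inverse Riesz isomorphisms $\Theta_{\mathcal{U}^{\mathcal{M}_i}}^{-1}$ and $\Theta_{\mathcal{H}^{\mathcal{M}_i}}^{-1}$ to return to the control spaces gives precisely \eqref{stat_lambda} and \eqref{stat_psi}. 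I expect the main difficulty to be organizational rather than analytical: keeping the numerous dual spaces, the Riesz maps, the adjoints $A_i^*,\mathcal{B}_i^*,\mathcal{C}_i^*$ and the sign factor $(-1)^{\chi_i^m}$ of \eqref{defB} mutually consistent, and, above all, correctly handling the fact that every control $\Lambda^m,\Psi^m$ is shared by the two fractures indexed by $I_{S_m}$, so that the trace-wise components of $\nabla J$ assemble the contributions of both fractures and the per-fracture relations \eqref{stat_lambda}--\eqref{stat_psi} must be read as the building blocks of this assembled optimality system.
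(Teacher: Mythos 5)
Your proposal is correct and follows essentially the same route as the paper: compute the G\^ateaux derivative of the reduced functional fracture by fracture, use the adjoint state $P_i$ defined by \eqref{def_p} to eliminate the sensitivity $\delta H_i$ via $\langle A_i^*P_i,A_i^{-1}\mathcal{B}_i\delta\Lambda_i\rangle_{V_i',V_i}$ and $\langle A_i^*P_i,A_i^{-1}\mathcal{C}_i\delta\Psi_i\rangle_{V_i',V_i}$, and read off \eqref{stat_lambda}--\eqref{stat_psi} through the Riesz maps. You are in fact somewhat more explicit than the paper about the coercivity of $A_i$, the convexity of the reduced functional, and the fact that each trace control is shared by two fractures so the assembled gradient sums the two per-fracture contributions --- points the paper leaves implicit.
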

\begin{proof}
%	First of all we remember that
%	\begin{align*}
%	J^m(\Lambda^m,\Psi^m)&= \sum_{i \in I_{S_m}}||\Gamma_i^mH_i(\Lambda^m,\Psi^m)-\Psi^m||^2_{\mathcal{H}^{S}}=\\[-0.1em]
%	&=\sum_{i \in I_{S_m}}(\Gamma_i^mH_i(\Lambda^m,\Psi^m)-\Psi^m,\Gamma_i^mH_i(\Lambda^m,\Psi^m)-\Psi^m)_{\mathcal{H}^{S}}.
%	\end{align*}
	Let us consider the increments $\delta\Lambda_i$ and $\delta\Psi_i$, concerning the control variables $\Lambda_i$ and $\Psi_i$ respectively, and let us differentiate the cost functional $J(\Lambda,\Psi)$ with respect to the control variables:\\
	\begin{align*}
	\cfrac{\partial J_i}{\partial \Lambda_i}(\Lambda_i+\delta\Lambda_i,\Psi_i)&=2(\Gamma_iH_i(\Lambda_i,\Psi_i)-\Psi_i,\Gamma_iH_i(\delta\Lambda_i,0)_{\mathcal{H}^{\mathcal{M}_i}})=\nonumber\\
	%&=2\sum_{i \in I_{S_m}}\left\langle {\Gamma_i^m}^*\Theta_{\mathcal{H}^{S_i}}(\Gamma_i^mH_i(\Lambda^m,\Psi^m)-\Psi^m),H_i(\delta\Lambda^m,0)\right\rangle _{V_i',V_i}=\nonumber\\&=2\sum_{i \in I_{S_m}}\left\langle {A_i}^*P_i,H_i(\delta\Lambda^m,0)\right\rangle_{V_i',V_i}=\nonumber\\
	&=2\left\langle {A_i}^*P_i,A_i^{-1}\mathcal{B}_i\delta\Lambda_i\right\rangle_{V_i',V_i}
	%&=2\sum_{i \in I_{S_m}}\left\langle {\mathcal{B}_i^m}^*P_i,\delta\Lambda^m\right\rangle_{{\mathcal{U}^{S}}',\mathcal{U}^{S}}=\nonumber \\
	=2(\Theta_{\mathcal{U}^{\mathcal{M}_i}}^{-1}{\mathcal{B}_i}^*P_i,\delta\Lambda_i)_{\mathcal{U}^{\mathcal{M}_i}}\nonumber
	\end{align*}
	\begin{align*}
	\cfrac{\partial J_i}{\partial\Psi_i}(\Lambda_i,\Psi_i+&\delta\Psi_i)=2\left(\Gamma_iH_i(\Lambda_i,\Psi_i)-\Psi_i,\Gamma_iH_i(0,\delta\Psi_i)-\delta\Psi_i\right) _{\mathcal{H}^{\mathcal{M}_i}}=\nonumber\\
	%&=2\sum_{i \in I_{S_m}}\left(\Gamma_i^mH_i(\Lambda^m,\Psi^m)-\Psi^m,\Gamma_i^mH_i(0,\delta\Psi^m)\right) _{\mathcal{H}^{S}}+\nonumber\\[-0.1em]
	%&\qquad \qquad-2\sum_{i \in I_{S_m}}\left(\Gamma_i^mH_i(\Lambda^m,\Psi^m)-\Psi^m,\delta\Psi^m\right) _{\mathcal{H}^{S}}=\nonumber\\[0.2em]
	%&=2\sum_{i \in I_{S_m}}\left\langle {\Gamma_i^m}^*\Theta_{\mathcal{H}^{S}}({\Gamma_i^m}H_i(\Lambda^m,\Psi^m)-\Psi^m),H_i(0,\delta\Psi^m)\right\rangle  %_{V_i',V_i}+\nonumber\\
	%&\qquad \qquad-2\left(\Gamma_i^mH_i(\Lambda^m,\Psi^m)-\Psi^m,\delta\Psi^m\right) _{\mathcal{H}^{S}}=\nonumber\\[0.2em]
	&=2\left\langle {A_i}^*P_i, A_i^{-1}\mathcal{C}_i\delta\Psi_i\right\rangle_{V_i',V_i}
%	+
%	\\[-0.6em]
%	&\qquad \qquad
-2\left(\Gamma_iH_i(\Lambda_i,\Psi_i)-\Psi_i,\delta\Psi_i\right) _{\mathcal{H}^{\mathcal{M}_i}}=\nonumber\\[0.2em]
%	&=2\sum_{i \in I_{S_m}}\left\langle{\mathcal{C}_i^m}^*P_i,\delta\Psi^m\right\rangle_{{\mathcal{H}^{S}}',\mathcal{H}^{S}}+\\[-0.6em]
%	&\qquad \qquad-2\sum_{i \in I_{S_m}}\left(\Gamma_i^mH_i(\Lambda^m,\Psi^m)-\Psi^m,\delta\Psi^m\right) _{\mathcal{H}^{S}}=\nonumber\\[0.2em]
	&=2\left(\Theta_{\mathcal{H}^{\mathcal{M}_i}}^{-1}{\mathcal{C}_i}^*P_i-\Gamma_iH_i(\Lambda_i,\Psi_i)+\Psi_i,\delta\Psi_i\right) _{\mathcal{H}^{\mathcal{M}_i}},
	\end{align*}
	and this yields the thesis.
	\end{proof}

The derivatives computed in the proof of Proposition \ref{prop1} represent the Frechet derivative of the Lagrangian function associated to problem (\ref{minJm}), for which the variable $P_i \in V_i$ is the Lagrangian multiplier on fracture $F_i$. The solution to problem (\ref{minJm}) can then be found by imposing stationarity conditions for the Lagrangian. Nevertheless, as we will show later, when dealing with huge and complex DFNs it might be computationally more convenient to minimize $J(\Lambda,\Psi)$ using an iterative method, such as the conjugate gradient method.
Starting from the derivatives computed in Proposition \ref{prop1} let us consider the following quantities, for each $i\in\mathcal{J}$:
\begin{align}
&\delta \Lambda_i=\Theta_{\mathcal{U}^{\mathcal{M}_i}}^{-1}{\mathcal{B}_i}^*P_i, &\delta \Lambda=\sum_{i \in \mathcal{J}}\delta \Lambda_i,\\
&\delta \Psi_i=\Theta_{\mathcal{H}^{\mathcal{M}_i}}^{-1}{\mathcal{C}_i}^*P_i-\Gamma_iH_i(\Lambda_i,\Psi_i)+\Psi_i, &\delta \Psi=\sum_{i \in \mathcal{J}}\delta \Psi_i.
\end{align}
%where the multiplication by $\frac{1}{2}$ avoids to consider twice the contribution of a trace on each of the two intersecting fractures.
Let then $\delta H_i=H_i(\delta \Lambda_i, \delta \Psi_i)$ and $\delta P_i$ be the solutions of
\begin{align}
&A_i\delta H_i=\mathcal{B}_i\delta \Lambda_i+ \mathcal{C}_i \delta \Psi_i, \quad \forall i\in \mathcal{J}\\
&{A_i}^*\delta P_i={\Gamma_i}^*\Theta_{\mathcal{H}^{\mathcal{M}_i}}(\Gamma_i\delta H_i-\delta \Psi_i) , \quad \forall i\in \mathcal{J}.
\end{align}

\begin{proposition}
	Given the control variable $W:=(\Lambda,\Psi)$, let us increment it by a step $\zeta\delta W$, with $\delta W:=(\delta \Lambda,\delta \Psi)$. The steepest descent method corresponds to the stepsize
	\begin{equation}
	\zeta=\cfrac{\sum_{i \in \mathcal{J}}\limits\left[ (\delta \Lambda_i, \delta \Lambda_i)_{\mathcal{U}^{\mathcal{M}_i}}+(\delta \Psi_i,\delta \Psi_i)_{\mathcal{H}^{\mathcal{M}_i}}\right]}{\sum_{i \in \mathcal{J}}\limits\left[\left\langle \mathcal{B}_i\delta \Lambda_i+\mathcal{C}_i\delta \Psi_i,\delta P_i\right\rangle _{V_i',V_i}-(\Gamma_i\delta H_i,\delta \Psi_i)_{\mathcal{H}^{\mathcal{M}_i}}+(\delta \Psi_i, \delta \Psi_i)_{\mathcal{H}^{\mathcal{M}_i}}\right] }
	\end{equation} 
	\label{prop2}
\end{proposition}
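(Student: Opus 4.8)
The plan is to exploit the fact that problem \eqref{minJm} is a linear--quadratic control problem: the state $H_i$ depends affinely on the controls through the constraint $A_iH_i=\mathcal{B}_i\Lambda_i+\mathcal{C}_i\Psi_i+Q_i$, and the cost $J(\Lambda,\Psi)=\sum_{i\in\mathcal{J}}\|\Gamma_iH_i-\Psi_i\|^2_{\mathcal{H}^{\mathcal{M}_i}}$ is the squared norm of a residual that is affine in $(\Lambda,\Psi)$. Consequently, along the ray $\zeta\mapsto W+\zeta\delta W$ the scalar function $\phi(\zeta):=J(W+\zeta\delta W)$ is a convex quadratic polynomial, and the exact line-search (steepest descent) stepsize is its unique stationary point, $\zeta=-\phi'(0)/\phi''(0)$. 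First I would make the affine dependence explicit: by linearity of the constraint, $H_i(W+\zeta\delta W)=H_i+\zeta\delta H_i$, with $\delta H_i$ the solution of $A_i\delta H_i=\mathcal{B}_i\delta\Lambda_i+\mathcal{C}_i\delta\Psi_i$ introduced before the statement. Writing the per-fracture residual $R_i:=\Gamma_iH_i-\Psi_i$ and its increment $\delta R_i:=\Gamma_i\delta H_i-\delta\Psi_i$, one gets $\phi(\zeta)=\sum_{i\in\mathcal{J}}\|R_i+\zeta\delta R_i\|^2_{\mathcal{H}^{\mathcal{M}_i}}$, whence $\phi'(0)=2\sum_i(R_i,\delta R_i)_{\mathcal{H}^{\mathcal{M}_i}}$ and $\phi''(0)=2\sum_i(\delta R_i,\delta R_i)_{\mathcal{H}^{\mathcal{M}_i}}$. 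It then remains to recognize the numerator $\sum_i(R_i,\delta R_i)$ and the denominator $\sum_i(\delta R_i,\delta R_i)$ as the two sums appearing in the statement.

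For the numerator I would transfer the $\Gamma_i\delta H_i$ contribution onto the adjoint state. Using the defining equation \eqref{def_p} for $P_i$ in the form $A_i^*P_i=\Gamma_i^*\Theta_{\mathcal{H}^{\mathcal{M}_i}}R_i$, together with the definition of the adjoint of $A_i$ and of $\delta H_i$, I compute $(R_i,\Gamma_i\delta H_i)_{\mathcal{H}^{\mathcal{M}_i}}=\langle A_i^*P_i,\delta H_i\rangle_{V_i',V_i}=\langle \mathcal{B}_i\delta\Lambda_i+\mathcal{C}_i\delta\Psi_i,P_i\rangle_{V_i',V_i}$. Pushing $\mathcal{B}_i$ and $\mathcal{C}_i$ onto their adjoints and inserting the relations that define the descent direction, namely $\mathcal{B}_i^*P_i=\Theta_{\mathcal{U}^{\mathcal{M}_i}}\delta\Lambda_i$ and $\mathcal{C}_i^*P_i=\Theta_{\mathcal{H}^{\mathcal{M}_i}}(\delta\Psi_i+R_i)$ from the definitions of $\delta\Lambda_i$ and $\delta\Psi_i$, the spurious cross term $(R_i,\delta\Psi_i)$ cancels and I am left with $(R_i,\delta R_i)_{\mathcal{H}^{\mathcal{M}_i}}=(\delta\Lambda_i,\delta\Lambda_i)_{\mathcal{U}^{\mathcal{M}_i}}+(\delta\Psi_i,\delta\Psi_i)_{\mathcal{H}^{\mathcal{M}_i}}$, i.e.\ exactly the numerator.

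For the denominator I would repeat the same adjoint manipulation, this time with the \emph{linearized} adjoint $\delta P_i$ solving $A_i^*\delta P_i=\Gamma_i^*\Theta_{\mathcal{H}^{\mathcal{M}_i}}\delta R_i$. Expanding $(\delta R_i,\delta R_i)=(\delta R_i,\Gamma_i\delta H_i)_{\mathcal{H}^{\mathcal{M}_i}}-(\delta R_i,\delta\Psi_i)_{\mathcal{H}^{\mathcal{M}_i}}$ and transferring the first term through $\delta P_i$ gives $(\delta R_i,\Gamma_i\delta H_i)_{\mathcal{H}^{\mathcal{M}_i}}=\langle \mathcal{B}_i\delta\Lambda_i+\mathcal{C}_i\delta\Psi_i,\delta P_i\rangle_{V_i',V_i}$, while the second term equals $(\Gamma_i\delta H_i,\delta\Psi_i)_{\mathcal{H}^{\mathcal{M}_i}}-(\delta\Psi_i,\delta\Psi_i)_{\mathcal{H}^{\mathcal{M}_i}}$; summing over $i$ reproduces the stated denominator. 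Assembling $\zeta=-\phi'(0)/\phi''(0)$ then yields the claimed expression, the sign being fixed by the orientation convention that the update moves opposite to $\delta W$, so that the displayed $\zeta$ is the positive step length.

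The main obstacle I expect is the bookkeeping of the duality pairings, adjoint operators and Riesz isomorphisms, and in particular keeping the two adjoint states apart: $P_i$ is tied to the current residual $R_i$ whereas $\delta P_i$ is tied to its increment $\delta R_i$, and only by invoking each in the correct term do the numerator and denominator land in precisely the stated forms. The subtlest point is the identity $\mathcal{C}_i^*P_i=\Theta_{\mathcal{H}^{\mathcal{M}_i}}(\delta\Psi_i+R_i)$: since $\delta\Psi_i$ carries the additional $-\,\Gamma_iH_i+\Psi_i=-R_i$ correction arising from the explicit dependence of $J$ on $\Psi$, this $-R_i$ term is exactly what cancels the $(R_i,\delta\Psi_i)$ contribution and collapses the numerator to the clean sum of squared norms; overlooking it is the easiest way to reach a wrong formula.
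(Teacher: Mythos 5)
Your proposal is correct and follows essentially the same route as the paper: expand $\zeta\mapsto J(W+\zeta\delta W)$ as a convex quadratic using the affine dependence of $H_i$ on the controls and set its derivative to zero. In fact you go one step further than the printed proof, which stops at $\zeta=\sum_i(R_i,\delta R_i)/\sum_i\|\delta R_i\|^2$ and asserts ``from which the thesis follows''; your adjoint-state manipulations (transferring $\Gamma_i\delta H_i$ through $P_i$ and $\delta P_i$, and using $\mathcal{C}_i^*P_i=\Theta_{\mathcal{H}^{\mathcal{M}_i}}(\delta\Psi_i+R_i)$ to cancel the cross term) are exactly the omitted identification of numerator and denominator, and they check out.
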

\begin{proof}
	It is sufficient to set to zero the derivative $\cfrac{\partial J(W+\zeta\delta W)}{\partial \zeta}$.
\begin{align*}
J(W+\zeta\delta W)&=
%J(\Lambda+\zeta\delta \Lambda,\Psi+\zeta\delta\Psi)=\\
%&=\sum_{i \in \mathcal{J}}||\Gamma_iH_i(\Lambda_i+\zeta\delta \Lambda_i,\Psi_i+\zeta\delta\Psi_i)-\Psi_i-\zeta\delta\Psi_i||^2_{\mathcal{H}^{\mathcal{M}_i}}\\
%	&=\sum_{m\in\mathcal{M}}(\Gamma_i^mH_i(\Lambda^m+\zeta\delta \Lambda^m,\Psi^m+\zeta\delta\Psi^m)-\Psi^m-\zeta\delta\Psi^m,\\[-1em]
%	& \qquad \qquad \qquad \Gamma_i^mH_i(\Lambda^m+\zeta\delta \Lambda^m,\Psi^m+\zeta\delta\Psi^m)-\Psi^m-\zeta\delta\Psi^m)_{\mathcal{H}^{\mathcal{S}}}=\\[0.5em]
%	&=\sum_{m \in \mathcal{M}}(\Gamma_i^mH_i(\Lambda^m,\Psi^m)+\zeta \Gamma_i^mH_i(\delta \Lambda^m, \delta \Psi^m)-\Psi^m-\zeta\delta\Psi^m,\\[-1em]
%	& \qquad \qquad \qquad \Gamma_i^mH_i(\Lambda^m,\Psi^m)+\zeta \Gamma_i^mH_i(\delta \Lambda^m,\delta\Psi^m)-\Psi^m-\zeta\delta\Psi^m)_{\mathcal{H}^{\mathcal{S}}}=\\[0.5em]
%	&=J(\Lambda,\Psi)+\zeta\sum_{m \in \mathcal{M}}(\Gamma_i^mH_i(\Lambda^m,\Psi^m)-\Psi^m,\Gamma_i^mH_i(\delta \Lambda^m,\delta\Psi^m)-\delta\Psi^m)_{\mathcal{H}^{\mathcal{S}_i}}+\\[-0.4em]
%	&\quad+\zeta\sum_{m \in \mathcal{M}}(\Gamma_i^mH_i(\delta \Lambda^m,\delta \Psi^m)-\delta\Psi^m,\Gamma_i^mH_i(\Lambda^m,\Psi^m)-\Psi^m)_{\mathcal{H}^{\mathcal{S}}}+\\[-0.4em]
%	&\quad+\zeta^2\sum_{m \in \mathcal{M}}(\Gamma_i^mH_i(\delta \Lambda^m,\delta \Psi^m)-\delta\Psi^m,\Gamma_i^mH_i(\delta \Lambda^m,\delta \Psi^m)-\delta\Psi^m)_{\mathcal{H}^{\mathcal{S}_i}}=\\
J(W)+2\zeta \sum_{i \in \mathcal{J}}(\Gamma_iH_i(\Lambda_i,\Psi_i)-\Psi_i,\Gamma_iH_i(\delta \Lambda_i,\delta\Psi_i)-\delta\Psi_i)_{\mathcal{H}^{\mathcal{M}_i}}+\\[-0.4em]
&\quad +\zeta^2\sum_{i \in \mathcal{J}}||\Gamma_iH_i(\delta \Lambda_i,\delta\Psi_i)-\delta\Psi_i||_{\mathcal{H}^{\mathcal{M}_i}}^2
\end{align*}
\begin{align*}
\cfrac{\partial J(W+\zeta\delta W)}{\partial \zeta}&=2\sum_{i \in \mathcal{J}}(\Gamma_iH_i(\Lambda_i,\Psi_i)-\Psi_i,\Gamma_iH_i(\delta \Lambda_i,\delta \Psi_i)-\delta\Psi_i)_{\mathcal{H}^{\mathcal{M}_i}}+\nonumber\\[-0.2em]
&\quad+2\zeta\sum_{i \in \mathcal{J}}||\Gamma_iH_i(\delta \Lambda_i,\delta\Psi_i)-\delta\Psi_i||_{\mathcal{H}^{\mathcal{M}_i}}^2=0
\end{align*}
	\begin{equation*}
	\zeta=\cfrac{\sum_{i \in \mathcal{J}}\limits(\Gamma_iH_i(\Lambda_i,\Psi_i)-\Psi_i,\Gamma_iH_i(\delta \Lambda_i,\delta \Psi_i)-\delta\Psi_i)_{\mathcal{H}^{\mathcal{M}_i}}}{{\sum_{i \in \mathcal{J}}\limits||\Gamma_iH_i(\delta \Lambda_i,\delta\Psi_i)-\delta\Psi_i||^2_{\mathcal{H}^{\mathcal{M}_i}}}}
\end{equation*} 
from which the thesis follows.
\end{proof}
\section{Discretization}
\label{DiscreteModel}

In this section we introduce suitable space dicretizations on fractures and traces, and we derive the corresponding discrete formulation of the problem. In the following, we will denote by lower-case letters the finite dimensional approximation of the continuous variables with respect to suitable bases. The same notation will be used for the discrete functions and for the corresponding vectors of degrees of freedom (DOFs), the meaning being clear from the context.

Let us build a triangular mesh on each fracture $F_i$, $i \in \mathcal{J}$, non conforming to the traces on the fracture, and let us define, on this mesh, suitable finite elements basis functions for the hydraulic head $\left\lbrace \varphi_{i,k}\right\rbrace _{k=1,...,N_H^i}$, with $N_H^i$ denoting the number of DOFs on the $i$-th fracture. The approximation of $H_i$ with respect to this basis is
\begin{equation}
h_i=\sum_{k=1}^{N_H^i} h_{i,k}\varphi_{i,k},
\end{equation}
where $h_{i,k}$ are the values of the degrees of freedom. For each trace $S_m \in \mathcal{S}$ let us build two different meshes and let us consider two bases $\left\lbrace \eta_{k}^m\right\rbrace _{k=1,...,N_{\Lambda}^m}$ and $\left\lbrace \theta_{k}^m\right\rbrace _{k=1,...,N_{\Psi}^m}$, with $N_{\Lambda}^m$ and $N_{\Psi}^m$ denoting the number of DOFs on the $m$-th trace, respectively for $\Lambda^m$ and $\Psi^m$. It is worth highlighting that neither a unique discretization nor the same basis is required for the two control variables. The discrete control variables are
\begin{equation}
\lambda^m=\sum_{k=1}^{N_{\Lambda}^m} \lambda_{k}^m\eta_{k}^m,
\qquad
\psi^m=\sum_{k=1}^{N_{\Psi}^m} \psi_{k}^m\theta_{k}^m,
\end{equation}
with $\lambda_k^m$ and $\psi_k^m$ denoting the values assigned to the DOFs.\\

Let us then define, for each fracture $F_i$ the vector of the hydraulic head DOFs $h_i \in \mathbb{R}^{N_H^i}$ obtained collecting column-wise the relative DOFs, and matrix $\bm{A}_i$ defined as
\begin{equation}
\bm{A_i} \in \mathbb{R}^{N_H^i \times N_H^i}, \quad 
(A_i)_{kl}=\int_{F_i}\bm{K}_i\nabla \varphi_{i,k}\nabla \varphi_{i,l}~dF_i+\alpha\int_{\mathcal{S}_i}{\varphi_{i,k}}_{|_{{\mathcal{S}_i}}}{\varphi_{i,l}}_{|_{\mathcal{S}_i}}~dS.
\end{equation} 
For each trace $S_m \in \mathcal{S}$ let us consider the vectors of control variable DOFs $\lambda^m \in \mathbb{R}^{N_{\Lambda}^m}$ and $\psi^m \in \mathbb{R}^{N_{\Psi}^m}$, obtained once again collecting column-wise the corresponding DOFs. Furthermore let us introduce the following matrices, defined on each trace $S_m$ of each fracture $F_i$, $\forall i \in \mathcal{J}$, $\forall m \in \mathcal{M}_i$:
\begin{align}
&\bm{\mathcal{B}_i^m} \in \mathbb{R}^{N_H^i \times N_{\Lambda}^m}, \quad (\mathcal{B}_i^m)_{kl}=(-1)^{\chi_i^m}\int_{S_m} {\varphi_{i,k}}_{|_{S_m}}~\eta_{l}^m~dS \label{Bim}\\
&\bm{\mathcal{C}_i^m} \in \mathbb{R}^{N_H^i \times N_{\Psi}^m},\quad (\mathcal{C}_i^m)_{kl}=\alpha \int_{S_m} {\varphi_{i,k}}_{|_{S_m}}~\theta_{l}^m~dS \label{Cim}
\end{align}
and the matrices $\bm{\mathcal{B}_i}$ and $\bm{\mathcal{C}_i}$ on $F_i$, obtained collecting respectively the matrices $\bm{\mathcal{B}_i^m}$ and $\bm{\mathcal{B}_i^m}$ for increasing values of indices $m \in \mathcal{M}_i=(m_1,... m_{M{_i}})$
\begin{equation}
\bm{\mathcal{B}_i}=\left[ \bm{\mathcal{B}_i^{m_1}}, \bm{\mathcal{B}_i^{m_2}},\cdots, \bm{\mathcal{B}_i^{m_{M_i}}} \right], \qquad
\bm{\mathcal{C}_i}=\left[ \bm{\mathcal{C}_i^{m_1}}, \bm{\mathcal{C}_i^{m_2}},\cdots, \bm{\mathcal{C}_i^{m_{M_i}}} \right].
\end{equation}
Finally let us define the vectors
\begin{equation}
\lambda_i=
\begin{bmatrix} \lambda_i^{m_1}\\\lambda^{m_2}\\ \vdots\\\lambda^{m_{M_i}}
\end{bmatrix} \in \mathbb{R}^{N_\Lambda^{\mathcal{M}_i}}, \qquad
\psi_i=
\begin{bmatrix}
\psi_i^{m_1}\\\psi^{m_2}\\\vdots\\\psi^{m_{M_i}} \end{bmatrix} \in \mathbb{R}^{N_{\Psi}^{\mathcal{M}_i}}, \label{lambdai_psii}
\end{equation}
with $ N_{\Lambda}^{\mathcal{M}_i}=\sum_{m\in \mathcal{M}_i}\limits N_{\Lambda}^m$ and $N_{\Psi}^{\mathcal{M}_i}=\sum_{m\in \mathcal{M}_i}\limits N_{\Psi}^m$.
We are then able to write the discrete matrix formulation of the constraints equation in problem (\ref{minJm})
\begin{equation}
\bm{A_i}h_i-\bm{\mathcal{B}_i}\lambda_i-\bm{\mathcal{C}_i}\psi_i=q_i.
\label{disc_i}
\end{equation}
where $q_i \in \mathbb{R}^{N_H^i}$ corresponds to the discrete source term on $F_i$.\\

In view of a global formulation over the whole DFN, a global vector containing the head's DOFs is built as
\begin{equation}
h=
\begin{bmatrix}
h_1 \\ h_2 \\ \vdots\\h_I
\end{bmatrix}\in \mathbb{R}^{N_H^F},\label{h}
\end{equation}
where $N_H=\sum_{i\in \mathcal{J}}\limits N_H^i$. Global vectors for the control variables are obtained concatenating column-wise vectors $\left\lbrace \lambda^m\right\rbrace _{m \in \mathcal{M}}$ and $\left\lbrace \psi^m\right\rbrace _{m \in \mathcal{M}}$, namely
\begin{equation}
\lambda=
\begin{bmatrix}
\lambda^1\\\lambda^2\\\vdots\\\lambda^M
\end{bmatrix} \in \mathbb{R}^{N_{\Lambda}}, \qquad  \psi=\begin{bmatrix}
\psi^1\\\psi^2\\\vdots\\\psi^M
\end{bmatrix} \in \mathbb{R}^{N_{\Psi}},
\end{equation}
where $N_{\Lambda}=\sum_{m\in \mathcal{M}}\limits N_{\Lambda}^m$ and $N_{\Psi}=\sum_{m\in \mathcal{M}}\limits N_{\Psi}^m$.
%Note that, in analogy with the continuous case, $\lambda^m$ and $\psi^m$ are used instead of $\lambda_i$ and $\psi_i$ defined in (\ref{lambdai_psii}) in order to avoid duplication of unknowns.\\
%\indent The greatest issues are hidden in the construction of the global matrices, especially for those that will multiply the variables $\lambda$ and $\psi$ defined above. The idea is, first of all, to build up matrices starting from the ones defined in (\ref{Bim}) and (\ref{Cim}), but taking dimensionally into account all the domain's traces and not simply those concerning the $i$-th fracture. 
Let us define, $\forall i \in \mathcal{J}$, matrices
\begin{align}
&\bm{\mathcal{B}_i^m}=\bm{0} \in \mathbb{R}^{N_H^i \times N_{\Lambda}^m} ~\forall m \notin \mathcal{M}_i\\
&\bm{\mathcal{C}_i^m}=\bm{0} \in \mathbb{R}^{N_H^i \times N_{\Psi}^m} ~\forall m \notin \mathcal{M}_i.
\end{align} 
and, recalling definitions in \eqref{Bim} and \eqref{Cim}, we build:
\begin{align}
&\bm{\mathcal{B}_i^{\mathcal{M}}}=\left[ \bm{\mathcal{B}_i^1}, \bm{\mathcal{B}_i^2}, \cdots, \bm{\mathcal{B}_i^M} \right]  \in \mathbb{R}^{N_H^i \times N_{\Lambda}},\label{Bim_new}\\
&\bm{\mathcal{C}_i^{\mathcal{M}}}=\left[ \bm{\mathcal{C}_i^1}, \bm{\mathcal{B}_i^2}, \cdots, \bm{\mathcal{C}_i^M} \right]  \in \mathbb{R}^{N_H^i \times N_{\Lambda}}.\label{Cim_new}
\end{align}
and
\begin{equation}
\bm{\mathcal{B}}=
\begin{bmatrix}
\bm{\mathcal{B}_1^{\mathcal{M}}}\\\bm{\mathcal{B}_2^{\mathcal{M}}}\\\vdots\\\bm{\mathcal{B}_I^{\mathcal{M}}}
\end{bmatrix}\in \mathbb{R}^{N_H\times N_{\Lambda}}, \qquad 
\bm{\mathcal{C}}=
\begin{bmatrix}
\bm{\mathcal{C}_1^{\mathcal{M}}}\\\bm{\mathcal{C}_2^{\mathcal{M}}}\\\vdots\\\bm{\mathcal{C}_I^{\mathcal{M}}}
\end{bmatrix}\in \mathbb{R}^{N_H\times N_{\Psi}}\label{BC},
\end{equation}
such that the global discrete form of the constraints equation becomes
\begin{equation}
\bm{A}h-\bm{\mathcal{B}}\lambda-\bm{\mathcal{C}}\psi=q \label{eqglob}
\end{equation}
where $
\bm{A}=\text{diag}(\bm{A_1}~\bm{A_2}~...~\bm{A_I}) \in \mathbb{R}^{N_H \times N_H}$
and $q=(q_1^T~q_2^T~...~q_I^T)^T \in \mathbb{R}^{N_H}.$\\
\indent The discrete functional is obtained from equation (\ref{Jim}) by use of the discrete functions and of $L^2$ norms in place of $\mathcal{H}^m$ norms, this yielding, for $i \in \mathcal{J}$, $m \in \mathcal{M}_i$, to 
%In order to get the discrete matrix formulation of problem (\ref{minJm}) it is of course necessary to carry out even the discrete form of the cost functional, that we will denote by $\tilde{J}_i^m(\lambda^m,\psi^m)$ and define as 
%%Let us set $\mathcal{K}_i^m=\left\lbrace k \in \left\lbrace 1,...,N_H^i\right\rbrace ~:~supp\left\lbrace\varphi_{i,k}\right\rbrace  \cap S_m \neq \emptyset  \right\rbrace$.The discrete version of $J^m(\Lambda^m,\Psi^m)$, that we will denote by $\tilde{J}^m(\lambda^m,\psi^m)$, can be obtained as
%%\begin{align*}
\begin{equation}
\tilde{J}_i^m(\lambda^m,\psi^m)=||{h_i}_{|_{S_m}}(\lambda^m,\psi^m)-\psi^m||^2_{L^2}.
\end{equation}
%&=\sum_{i \in I_{S_m}}\left[ \int_{S_m}\left( \sum_{k=1}^{N_H^i}h_{i,k}(\lambda_k^m,\psi_k^m){\varphi_{i,k}}_{|_{S_m}}-\sum_{k=1}^{N_H^i}\psi_{k}^m\theta_{k}^m\right)^2~dS\right]=\\
%&=\sum_{i \in I_{S_m}}\Biggl[ \sum_{k \in \mathcal{K}_i^m}h_{i,k}^2(\lambda_k^m,\psi_k^m)\int_{S_m}
%{\varphi_{i,k}}^2_{|_{S_m}}~dS+\\
%&\quad+2\sum_{k,l\in \mathcal{K}_i^m} h_{i,k}(\lambda_k^m,\psi_k^m) h_{i,l}(\lambda_l^m,\psi_l^m)\int_{S_m}{\varphi_{i,k}}_{|_{S_m}}
%{\varphi_{i,l}}_{|_{S_m}}~dS
%+\\[-0.4em]
%& \quad +\sum_{k \in \mathcal{K}_i^m}{\psi_{k}^m}^2\int_{S_m}{\theta_{k}^m}^2~dS+2\sum_{k,l \in K_i^m}\psi_{k}^m\psi_{l}^m\int_{S_m}\theta_{k}^m\theta_{l}^m~dS+\\[-0.4em]
%& \quad -2\sum_{k \in \mathcal{K}_i^m}\sum_{l \in \mathcal{K}_i^m}h_{i,k}(\lambda_k^m,\psi_k^m)\psi_{l}^m\int_{S_m}{\varphi_{i,k}}_{|_{S_m}}{\theta_{l}^m}~dS \Biggr].
%\end{align*}
Defining the matrices
\begin{align}
&\bm{G_{i}^{h,m}} \in \mathbb{R}^{N_H^i\times N_H^i}, \quad \left(G_{i}^{h,m}\right)_{kl}=\int_{S_m}{\varphi_{i,k}}_{|_{S_m}}
{\varphi_{i,l}}_{|_{S_m}}~dS  \label{Gh_i_m} \\
&\bm{G^{\psi,m}} \in \mathbb{R}^{N_{\Psi}^m \times N_{\Psi}^m}, \quad \left(G^{\psi,m}\right)_{kl}=\int_{S_m}{\theta_{k}^m}{\theta_{l}^m}~dS 
\label{Gpsi_i_m}
\end{align}
and
\begin{align}
&\bm{G_i^h}=\sum_{m\in \mathcal{M}_i}\bm{G_{i}^{h,m}} \in \mathbb{R}^{N_H^i\times N_H^i},\\
&\bm{G_i^\psi}=\text{diag}(\bm{G^{\psi,m_1}}~\bm{G^{\psi,m_2}}~...~\bm{G^{\psi,m_{M_i}}}) \in \mathbb{R}^{N_{\Psi}^{\mathcal{M}_i}\times {N_{\Psi}^{\mathcal{M}_i}}}, \label{Gpsi_i}
\end{align}
%Furthermore, to obtain a lighter notation, let us set $$h_i=h_i(\lambda^m,\psi^m)=\bm{A_i}^{-1}(\bm{\mathcal{B}_i^m}\lambda^m+\bm{\mathcal{C}_i^m}\psi^m+q_i).$$
the discrete cost functional relative to the $i$-th fracture takes the form:
\begin{equation}
\tilde{J}_i(\lambda_i,\psi_i)=
h_i^T\bm{G_i^{h}}h_i+\psi_i^T\bm{G_i^{\psi}}\psi_i-h_i^T\bm{\mathcal{C}_i}\psi_i-\psi_i^T\bm{\mathcal{C}_i}^Th_i \quad \forall i \in \mathcal{J},
\label{Jtilde}
\end{equation}
where $h_i=h_i(\lambda_i, \psi_i)$.
%\begin{equation*}
%\tilde{J}^m(\lambda^m,\psi^m)=\sum_{i \in I_{S_m}}\bigl[h_i^T\bm{G_{i,m}^{h}}h_i+(\psi^m)^T\bm{G_{m}^{\psi}}\psi^m-h_i^T\bm{\mathcal{C}_{i}^m}\psi^m-(\psi^m)^T\left( \bm{\mathcal{C}_{i}^m}\right) ^Th_i\bigr].
%\end{equation*}
Finally, introducing the matrices
\begin{align}
&\bm{G^h}=\text{diag}(\bm{G_1^h}~\bm{G_2^h}~...~\bm{G_I^h})\in \mathbb{R}^{N_H\times N_H}\\
&\bm{G^{\psi}}=2 \left( \text{diag}(\bm{G_1^\psi}~\bm{G_2^\psi}~...~\bm{G_M^\psi})\right)\in \mathbb{R}^{N_{\Psi}\times N_{\Psi}}. \label{Gpsi}
\end{align}
The global discrete matrix formulation of the cost functional is obtained as
\begin{equation}
\tilde{J}(\lambda,\psi)=
%\sum_{m \in \mathcal{M}}\tilde{J}^m(\lambda^m,\psi^m)=
h^T\bm{G^h}h+\psi^T\bm{G^{\psi}}\psi-h^T\bm{\mathcal{C}}\psi-\psi^T\bm{\mathcal{C}}^Th \label{funzionale},
\end{equation}
with $h=h(\lambda, \psi)$,
thanks to which we obtain the following global discrete matrix formulation of the problem describing the subsurface flow through a DFN:
	\begin{equation}
	\min_{(\lambda,\psi)}\tilde{J}(\lambda,\psi) \text{ subject to } (\ref{eqglob}). \label{minJtilde}
%	&\bm{A}h-\bm{\mathcal{B}}\lambda-\bm{\mathcal{C}}\psi=q.
%	\nonumber	
	\end{equation}
Exploiting the linearity of the constraints we derive the following unconstrained minimization problem equivalent to (\ref{minJtilde}), replacing $h=h(\lambda,\psi)=\bm{A}^{-1}\bm{\mathcal{B}}\lambda+\bm{A}^{-1}\bm{\mathcal{C}}\psi+\bm{A}^{-1}q$ in the definition of the functional:
\begin{equation}
\min_{(\lambda,\psi)}\tilde{J}^*(\lambda, \psi)\label{Jstar}
\end{equation}
where
\begin{equation}\tilde{J}^*(\lambda,\psi)=\begin{bmatrix}
\lambda^T & \psi^T
\end{bmatrix}
\hat{\bm{G}}                             
\begin{bmatrix}                             
\lambda \\ \psi\end{bmatrix} + 2d^T                            
\begin{bmatrix}
\lambda \\ \psi
\end{bmatrix}+ q^T \begin{bmatrix}
\bm{A}^{-T}\bm{G^h}\bm{A}^{-1}\end{bmatrix}q \label{sistemone},
\end{equation}
\begin{equation}\hat{\bm{G}}=
\begin{bmatrix}
\bm{\mathcal{B}}^T\bm{A}^{-T}\bm{G^h}\bm{A}^{-1}\bm{\mathcal{B}}\quad & \bm{\mathcal{B}}^T\bm{A}^{-T}(\bm{G^h}\bm{A}^{-1}-\bm{I})\bm{\mathcal{C}}\\\\
\bm{\mathcal{C}}^T(\bm{A}^{-T}\bm{G^h}-\bm{I}) \bm{A}^{-1}\bm{\mathcal{B}}\quad & \bm{G^{\psi}}+\bm{\mathcal{C}}^T(\bm{A}^{-T}\bm{G^h}\bm{A}^{-1}-\bm{A}^{-T}-\bm{A}^{-1})\bm{\mathcal{C}}
\end{bmatrix} ,  \label{matrice}
\end{equation}
and
\begin{equation}
d^T= q^T[ \bm{A}^{-T}\bm{G^h}\bm{A}^{-1}\bm{\mathcal{B}} \quad \bm{A}^{-T}(\bm{G^h}\bm{A}^{-1}-\bm{I})\bm{\mathcal{C}}]. \label{d}
\end{equation}

\section{Existence and uniqueness of the discrete solution}
\label{DiscrWellPosedness}
\newcommand{\Mkkt}{\bm{M}_{\text{KKT}}}
The system of optimality conditions (KKT-conditions) for problem~\eqref{minJtilde} can be written as:
\begin{equation}
\begin{bmatrix}
\bm{G^h} & 0 & -\bm{\mathcal{C}} & \bm{A}^T\\
0 & 0 & 0 & -\bm{\mathcal{B}}^T\\
-\bm{\mathcal{C}}^T & 0 & \bm{G^{\psi}} & -\bm{\mathcal{C}}^T\\
\bm{A} & -\bm{\mathcal{B}} & -\bm{\mathcal{C}} & 0\\
\end{bmatrix}
\begin{bmatrix}
h\\ \lambda \\ \psi \\ -p
\end{bmatrix}=
\begin{bmatrix}
0\\0\\0\\q
\end{bmatrix},
\label{KKT}
\end{equation}
where $p$ is the array of Lagrange multipliers. Grouping matrices and vectors as follows:
\begin{equation}
\label{matrDef}
\bm{\mathcal{G}}=\begin{bmatrix}
\bm{G^h} & 0 & -\bm{\mathcal{C}}\\
0 & 0 & 0 \\
-\bm{\mathcal{C}}^T & 0 & \bm{G^{\psi}}
\end{bmatrix},
\quad
\bm{\mathcal{A}}=\begin{bmatrix}
\bm{A} & -\bm{\mathcal{B}} & -\bm{\mathcal{C}}
\end{bmatrix},
\quad
w=\begin{bmatrix}
h\\
\lambda\\
\psi\\
-p
\end{bmatrix},
\quad
q_\text{KKT}=\begin{bmatrix}
0\\
0\\
0\\
q
\end{bmatrix},
\end{equation}
the KKT system, can be compactly rewritten as:
\begin{equation}
\Mkkt=\begin{bmatrix}
\bm{\mathcal{G}} & \bm{\mathcal{A}}^T\\
\bm{\mathcal{A}} & \bm{O}
\end{bmatrix},
\qquad
\Mkkt w=q_\text{KKT}.
\label{Mkkt}
\end{equation}
\begin{proposition}
\label{discretewellposedness}
Matrix $\Mkkt$ in \eqref{Mkkt} is non singular and the unique solution of problem \eqref{Mkkt} is equivalent to the solution of \eqref{minJtilde}.
\end{proposition}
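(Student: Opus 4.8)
The plan is to treat $\Mkkt$ as the saddle-point (KKT) matrix of the equality-constrained quadratic program \eqref{minJtilde}, with unknown $z=(h,\lambda,\psi)$, symmetric block $\bm{\mathcal{G}}$, and constraint block $\bm{\mathcal{A}}=[\bm{A}\ {-}\bm{\mathcal{B}}\ {-}\bm{\mathcal{C}}]$, and to invoke the classical nonsingularity criterion: a matrix $\begin{bmatrix}\bm{\mathcal{G}}&\bm{\mathcal{A}}^T\\\bm{\mathcal{A}}&\bm{O}\end{bmatrix}$ with $\bm{\mathcal{G}}$ symmetric positive semidefinite is nonsingular if and only if $\bm{\mathcal{A}}$ has full row rank and $\ker\bm{\mathcal{G}}\cap\ker\bm{\mathcal{A}}=\{0\}$ (equivalently, $\bm{\mathcal{G}}$ is positive definite on $\ker\bm{\mathcal{A}}$). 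I would establish these two properties and then read off uniqueness and the primal--dual equivalence from convex QP theory.

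First I would check that $\bm{\mathcal{A}}$ has full row rank $N_H$. Since $\bm{A}=\mathrm{diag}(\bm{A_1},\dots,\bm{A_I})$ sits as the leading square block of $\bm{\mathcal{A}}$, it suffices that every $\bm{A_i}$ is invertible. Each $\bm{A_i}$ is the Gram matrix of $(\bm{K}_i\nabla\cdot,\nabla\cdot)+\alpha(\,\cdot|_{\mathcal{S}_i},\cdot|_{\mathcal{S}_i})_{L^2}$, hence symmetric positive semidefinite, and $v_i^T\bm{A_i}v_i=0$ forces $\nabla v_i=0$ together with $v_i|_{\mathcal{S}_i}=0$; a constant that vanishes on $\Gamma_{iD}$ (where nonempty) or on one trace $S_m\in\mathcal{S}_i$ must be zero. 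Connectedness of $\Omega$ and $\Gamma_{iD}\neq\emptyset$ on at least one fracture guarantee that every fracture carries one such constraint — this is exactly what $\alpha>0$ buys — so each $\bm{A_i}$, hence $\bm{A}$, is SPD and $\mathrm{rank}\,\bm{\mathcal{A}}=N_H$. This also legitimizes the elimination $h=\bm{A}^{-1}(\bm{\mathcal{B}}\lambda+\bm{\mathcal{C}}\psi+q)$ used to pass to \eqref{Jstar}.

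Next I would show $\bm{\mathcal{G}}\succeq0$ and analyse its kernel on $\ker\bm{\mathcal{A}}$. By construction $z^T\bm{\mathcal{G}}z$ reproduces the cost functional $\tilde J$ read as a function of the independent pair $(h,\psi)$, namely a sum of squared $L^2$ trace mismatches $\sum_{i\in\mathcal{J}}\sum_{m\in\mathcal{M}_i}\|{h_i}_{|_{S_m}}-\psi^m\|_{L^2}^2\ge0$, so $\bm{\mathcal{G}}$ is positive semidefinite and, being a sum of squares, $z^T\bm{\mathcal{G}}z=0$ forces exact discrete continuity ${h_i}_{|_{S_m}}=\psi^m$ at every trace. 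Taking now $z\in\ker\bm{\mathcal{G}}\cap\ker\bm{\mathcal{A}}$, the relation $\bm{\mathcal{A}}z=0$ is the homogeneous constraint $\bm{A}h=\bm{\mathcal{B}}\lambda+\bm{\mathcal{C}}\psi$; contracting its $i$-th block with $h_i^T$ and summing over $i$, the $\alpha$-mass terms cancel against $\bm{\mathcal{C}}$ because ${h_i}_{|_{S_m}}=\psi^m$, while the $\bm{\mathcal{B}}$-terms telescope to zero trace by trace thanks to the opposite signs $(-1)^{\chi_{\munderbar{i}}^m}$, $(-1)^{\chi_{\bar{i}}^m}$ on the two fractures of $I_{S_m}$ sharing the common value $\psi^m$. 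What survives is $\sum_{i}(\bm{K}_i\nabla h_i,\nabla h_i)=0$, so each $h_i$ is constant; the Dirichlet datum on one fracture, propagated through the connected network via $\psi^m=0$ on shared traces, forces $h=0$, and then $\psi^m={h_i}_{|_{S_m}}=0$, i.e. $\psi=0$. The constraint then reduces to $\bm{\mathcal{B}}\lambda=0$.

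The hard part is the closing step: since $\bm{\mathcal{G}}$ is independent of $\lambda$ (its middle block vanishes), the variable $\lambda$ is pinned only through the constraint, so concluding $\lambda=0$ from $\bm{\mathcal{B}}\lambda=0$ demands that $\bm{\mathcal{B}}$ have trivial kernel. This is not automatic on arbitrary non-conforming meshes; it holds under a mild compatibility of the discretizations — for instance when, on each trace $S_m$, the span of the control basis $\{\eta^m_k\}$ is detected by the fracture traces $\{{\varphi_{i,k}}_{|_{S_m}}\}$ (e.g. $\mathrm{span}\{\eta^m_k\}\subseteq\mathrm{span}\{{\varphi_{i,k}}_{|_{S_m}}\}$ for one $i\in I_{S_m}$) — which I would state as a standing hypothesis and use to obtain $\lambda=0$. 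Granting it, $\ker\bm{\mathcal{G}}\cap\ker\bm{\mathcal{A}}=\{0\}$ and $\Mkkt$ is nonsingular. The equivalence with \eqref{minJtilde} finally follows from standard theory: \eqref{minJtilde} minimizes a convex quadratic (as $\bm{\mathcal{G}}\succeq0$) under affine constraints, for which the KKT system \eqref{KKT} is necessary and sufficient for optimality; nonsingularity of $\Mkkt$ then yields a unique KKT point whose $(\lambda,\psi)$-components are the unique minimizer, and conversely.
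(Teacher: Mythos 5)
Your proposal is correct and follows the same overall skeleton as the paper's argument: nonsingularity of $\bm{M}_{\text{KKT}}$ is reduced to (i) full row rank of $\bm{\mathcal{A}}$, obtained from the invertibility of $\bm{A}$, and (ii) positive definiteness of $\bm{\mathcal{G}}$ on $\ker(\bm{\mathcal{A}})$, after which classical quadratic-programming arguments give uniqueness and the primal--dual equivalence; this is precisely the content of Lemma~\ref{lemma1} and the one-line remark following it. Where you differ is in how (ii) is verified. The paper parametrizes $\ker(\bm{\mathcal{A}})$ by the canonical basis vectors $e_k$ of $\mathbb{R}^{N_\Lambda+N_\Psi}$ and argues separately for the $\lambda$-directions and the $\psi$-directions that the induced state $\bm{A}^{-1}\begin{bmatrix}\bm{\mathcal{B}} & \bm{\mathcal{C}}\end{bmatrix}e_k$ cannot annihilate the trace mismatch, via a fracture-by-fracture inspection of the constraint equation \eqref{disc_i}. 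You instead take an arbitrary element of $\ker(\bm{\mathcal{G}})\cap\ker(\bm{\mathcal{A}})$, use $\bm{\mathcal{G}}\succeq 0$ (the functional is a sum of squares) to get exact trace continuity, and contract the constraint with $h$ so that the $\alpha$-mass terms cancel against $\bm{\mathcal{C}}\psi$ and the $\bm{\mathcal{B}}$-terms telescope, leaving an energy identity that forces $h=0$, $\psi=0$ and reduces everything to $\bm{\mathcal{B}}\lambda=0$. This buys you two things. First, it is logically tighter: positivity of $z_k^T\bm{\mathcal{G}}z_k$ on a basis does not by itself yield positive definiteness of $\bm{Z}^T\bm{\mathcal{G}}\bm{Z}$, so the kernel-intersection formulation (valid because $\bm{\mathcal{G}}\succeq 0$) is the cleaner way to close the lemma. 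Second, you isolate injectivity of $\bm{\mathcal{B}}$ as the one genuinely mesh-dependent hypothesis; that hypothesis is indeed necessary, since if $\bm{\mathcal{B}}\lambda=0$ with $\lambda\neq 0$ then $(h,\lambda,\psi,-p)=(0,\lambda,0,0)$ lies in the kernel of $\bm{M}_{\text{KKT}}$, and the paper uses it silently when it asserts $\bm{\mathcal{B}}e_k\neq 0$ without proof. Stating it as an explicit compatibility condition between the control basis $\{\eta^m_k\}$ and the fracture trace spaces is the more honest formulation, not a defect of your proof.
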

The proof of Proposition \ref{discretewellposedness} is based on the following lemma:
\begin{lemma}
\label{lemma1}
Matrix $\bm{\mathcal{A}}$ in \eqref{matrDef} is full row-rank and, being $\bm{Z}$ a matrix formed collecting column-wise vectors $z_k$, $k=1,\ldots,N_{\Lambda}+N_{\Psi}$, forming a basis of $\ker{(\bm{\mathcal{A}})}$, matrix $\bm{Z}^T\bm{\mathcal{G}}\bm{Z}$, for $\bm{\mathcal{G}}$ as in \eqref{matrDef}, is symmetric positive definite.
\end{lemma}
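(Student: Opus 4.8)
\emph{Overall plan.} I would prove the two assertions separately, deriving the full row-rank of $\bm{\mathcal{A}}$ from the invertibility of $\bm{A}$, and the positive definiteness of $\bm{Z}^T\bm{\mathcal{G}}\bm{Z}$ from the observation that, restricted to $\ker(\bm{\mathcal{A}})$, the quadratic form $w\mapsto w^T\bm{\mathcal{G}}w$ reproduces the discrete cost functional \eqref{funzionale}, which is a sum of squared $L^2$ trace mismatches.

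\emph{Full row-rank of $\bm{\mathcal{A}}$.} First I would check that each block $\bm{A}_i$ is symmetric positive definite: its first contribution is the stiffness matrix associated with the uniformly positive definite tensor $\bm{K}_i$, which is positive semidefinite with kernel spanned by the constants on the connected fracture $F_i$, while the second contribution $\alpha\int_{\mathcal{S}_i}{\varphi_{i,k}}_{|_{\mathcal{S}_i}}{\varphi_{i,l}}_{|_{\mathcal{S}_i}}\,dS$ is, for $\alpha>0$, strictly positive on the constants whenever $\mathcal{S}_i\neq\emptyset$. Since the network is connected and $\Gamma_D\neq\emptyset$, each fracture either carries a non-trivial Dirichlet portion, so the stiffness term alone is definite on $V_i$, or possesses at least one trace; in either case $\bm{A}_i$ is invertible, which is the discrete counterpart of the well-posedness of \eqref{eqmod}. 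Hence $\bm{A}=\text{diag}(\bm{A}_1,\dots,\bm{A}_I)$ is invertible, and since $\bm{\mathcal{A}}=[\bm{A},-\bm{\mathcal{B}},-\bm{\mathcal{C}}]$ contains the invertible $N_H\times N_H$ block $\bm{A}$, its $N_H$ rows are linearly independent, so $\bm{\mathcal{A}}$ has full row-rank and $\dim\ker(\bm{\mathcal{A}})=N_{\Lambda}+N_{\Psi}$.

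\emph{Positive semidefiniteness and reduction.} The matrix $\bm{\mathcal{G}}$ is symmetric, hence so is $\bm{Z}^T\bm{\mathcal{G}}\bm{Z}$; since $\bm{Z}$ has full column-rank it suffices to show $w^T\bm{\mathcal{G}}w>0$ for every nonzero $w=(h,\lambda,\psi)\in\ker(\bm{\mathcal{A}})$. Expanding the block product gives $w^T\bm{\mathcal{G}}w=h^T\bm{G^h}h+\psi^T\bm{G^{\psi}}\psi-h^T\bm{\mathcal{C}}\psi-\psi^T\bm{\mathcal{C}}^Th$, which is exactly the functional \eqref{funzionale} evaluated on the constrained state $h=h(\lambda,\psi)$ with vanishing source, because $w\in\ker(\bm{\mathcal{A}})$ means $\bm{A}h=\bm{\mathcal{B}}\lambda+\bm{\mathcal{C}}\psi$. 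By construction this equals $\sum_{i\in\mathcal{J}}\sum_{m\in\mathcal{M}_i}\|{h_i}_{|_{S_m}}-\psi^m\|^2_{L^2}\ge 0$, so $\bm{Z}^T\bm{\mathcal{G}}\bm{Z}$ is at least positive semidefinite and it remains to prove that $w^T\bm{\mathcal{G}}w=0$ forces $w=0$. When $w^T\bm{\mathcal{G}}w=0$ we get ${h_i}_{|_{S_m}}=\psi^m$ on every trace. Testing each constraint $\bm{A}_ih_i=\bm{\mathcal{B}}_i\lambda_i+\bm{\mathcal{C}}_i\psi_i$ against $h_i$, the $\alpha$-trace term of $\bm{A}_i$ cancels the $\bm{\mathcal{C}}_i$ term precisely because ${h_i}_{|_{S_m}}=\psi^m$, leaving $(\bm{K}_i\nabla h_i,\nabla h_i)=\sum_{m\in\mathcal{M}_i}(-1)^{\chi_i^m}\int_{S_m}{h_i}_{|_{S_m}}\lambda^m\,dS$. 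Summing over $\mathcal{J}$ and collecting the two contributions of each trace with opposite signs $(-1)^{\chi_i^m}$, $i\in I_{S_m}$, the right-hand side becomes $\sum_m\int_{S_m}({h_{\munderbar{i}}}_{|_{S_m}}-{h_{\bar{i}}}_{|_{S_m}})\lambda^m\,dS=0$. Thus $\sum_i(\bm{K}_i\nabla h_i,\nabla h_i)=0$, uniform positivity of $\bm{K}_i$ gives $\nabla h_i=0$, so each $h_i$ is constant; the fracture bearing the homogeneous Dirichlet datum forces $h_i=0$ there, and propagating $\psi^m={h_i}_{|_{S_m}}$ along the connected network yields $h_i=0$ for all $i$ and $\psi^m=0$ for all $m$. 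The constraint then collapses to $\bm{\mathcal{B}}\lambda=0$.

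\emph{Main obstacle.} I expect the delicate point to be the very last step: deducing $\lambda=0$ from $\bm{\mathcal{B}}\lambda=0$ requires $\bm{\mathcal{B}}$ to have full column-rank, i.e. the flux control functions $\eta^m_k$ must be injectively ``seen'' in $L^2(S_m)$ by the traces ${\varphi_{i,k}}_{|_{S_m}}$ of the fracture finite element functions. This is a genuine compatibility condition on the chosen bases and trace meshes, not an automatic consequence of the assembly, and I would invoke it as a standing discretization hypothesis (or argue it from how the trace meshes are built). The remaining ingredients — symmetry of $\bm{\mathcal{G}}$, the energy cancellation exploiting the sign pattern $(-1)^{\chi_i^m}$, and the connectivity propagation — are routine given the results already established.
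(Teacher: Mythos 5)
Your proof is correct in outline but follows a genuinely different route from the paper's, and the comparison is instructive. The paper does not argue with a generic element of $\ker(\bm{\mathcal{A}})$: it writes down the specific kernel basis vectors $z_k=\bigl[\bm{A}^{-1}[\bm{\mathcal{B}}\ \bm{\mathcal{C}}]e_k;\,e_k\bigr]$ associated with the canonical vectors $e_k$ and verifies $z_k^T\bm{\mathcal{G}}z_k>0$ one at a time: for $\lambda$-directions it uses only that $\bm{\mathcal{B}}e_k\neq 0$ (no zero columns), so that $\bar h_k=\bm{A}^{-1}\bm{\mathcal{B}}e_k$ has a nonzero trace and $\bar h_k^T\bm{G^h}\bar h_k>0$; for $\psi$-directions it shows by contradiction, via the constraint \eqref{disc_i}, that $\bar{\bar h}_{k|S_{m^*}}\neq\psi^{m^*}$. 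Your argument instead takes an arbitrary $w\in\ker(\bm{\mathcal{A}})$ with $w^T\bm{\mathcal{G}}w=0$, identifies $w^T\bm{\mathcal{G}}w$ with the functional \eqref{funzionale} at zero source, and runs a global energy argument (test the constraint with $h_i$, cancel the $\alpha$-trace terms against $\bm{\mathcal{C}}_i\psi_i$, sum over fractures so that the $(-1)^{\chi_i^m}$ contributions of each trace pair up) to force $h=0$, $\psi=0$ by connectivity and the Dirichlet condition. This is the logically cleaner route: since $\bm{\mathcal{G}}$ is only positive semidefinite, checking positivity of the quadratic form on a basis of $\ker(\bm{\mathcal{A}})$, as the paper does, does not by itself exclude that a nontrivial linear combination of the $z_k$ lies in $\ker(\bm{\mathcal{G}})$, so your "general $w$" formulation is what one actually needs.

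The price of the general argument is exactly the point you flag: after $h=0$, $\psi=0$ you are left with $\bm{\mathcal{B}}\lambda=0$, and concluding $\lambda=0$ requires $\bm{\mathcal{B}}$ to have full column rank, i.e., the $L^2(S_m)$ pairings $\int_{S_m}\varphi_{i,k|S_m}\eta_l^m\,dS$ must separate the discrete fluxes. You are right that this is a genuine compatibility condition between the $\lambda$-mesh on each trace and the trace of the fracture mesh (it fails, for instance, if the $\lambda$-mesh is refined well beyond the mesh induced on the trace by the triangulation), and it is consistent with the paper's choice $\bm{\delta_\lambda}<1$ in the numerical section. The paper's proof relies on the strictly weaker property that no column of $\bm{\mathcal{B}}$ vanishes, which suffices for its basis-vector check but not for definiteness on all of $\ker(\bm{\mathcal{A}})$; so the hypothesis you isolate is not an artifact of your approach but the assumption actually needed to make the lemma airtight. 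Stating it explicitly as a discretization hypothesis, as you propose, is the right resolution; apart from that, your argument (symmetry, the cancellation using the sign pattern, the connectivity propagation) is sound and complete.
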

\begin{proof}
Matrix $\bm{\mathcal{A}}$ is full row rank by construction, as matrix $\bm{A}$ in \eqref{matrDef} is non-singular. The size of $\ker{(\bm{\mathcal{A}})}$ is thus $N_{\Lambda}+N_{\Psi}$. Let us choose the canonical basis for $\mathbb{R}^{N_{\Lambda}+N_{\Psi}}$ and let us take the $k-th$ element of such basis, denoted by $e_k$, $k=1,\ldots,N_\Lambda+N_\Psi$. The corresponding element $z_k\in\ker{(\bm{\mathcal{A}})}$ has the following structure:
\begin{displaymath}
z_k=\begin{bmatrix}
\bm{A}^{-1}\begin{bmatrix}
\bm{\mathcal{B}} & \bm{\mathcal{C}}
\end{bmatrix}
e_k\\
e_k
\end{bmatrix}.
\end{displaymath}
Let us now choose $1\leq k\leq N_{\Lambda}$, thus giving
$$z_k=\begin{bmatrix} \bm{A}^{-1}\bm{\mathcal{B}}e_k\\ e_k \end{bmatrix}:=\begin{bmatrix}\bar{h}_k\\ e_k\end{bmatrix},$$
with $\bar{h}_k$ being different from zero on at least one trace of the network, in virtue of equation \eqref{disc_i}, given the non singularity of $\bm{A}$ and being $\mathcal{B} e_k\neq 0$. Thus it can be easily concluded that $z_k^T \bm{\mathcal{G}}z_k= \bar{h}_k^T \bm{G^h} \bar{h}_k> 0$, for all $1\leq k\leq N_{\Lambda}$.

If now $N_{\Lambda}+1\leq k\leq N_{\Psi}$, it is 
$$z_k=\begin{bmatrix}\bm{A}^{-1}\bm{\mathcal{C}}e_k\\ e_k \end{bmatrix}:=\begin{bmatrix}\bar{\bar{h}}_k\\ e_k\end{bmatrix},$$
and, correspondingly to $e_k$, there is a unique index $m^*\in\mathcal{M}$ such that $\psi^{m^*}\neq 0$, being, instead $\lambda\equiv 0$. Let us select the two fractures, $F_i$ and $F_j$ such that $\{i,j\}=I_{S_{m^\star}}$
%There are only two fractures $F_i$ and $F_j$ in $\Omega$ such that $m^*\in\mathcal{M}_i$ and $m^*\in\mathcal{M}_j$.
If the networks contains more than two fractures, at least one of these fractures, say $F_i$, has more than one trace and on $F_i$ the discrete constraint equation reads: $\forall j=1,\ldots,N^i_H$
\begin{displaymath}
\int_{F_i} \bm{K}_i \nabla \bar{\bar{h}}_k \nabla \varphi_j \ dF_i + \alpha \!\!\! \sum_{\substack{m\in\mathcal{M}_i,\\ m\neq m^*}}\int_{S_m}\bar{\bar{h}}_{k|S_{m}} \varphi_{j|S_m} \ dS=\alpha \int_{S_{m^*}}\!\! \left(\bar{\bar{h}}_{k|S_{m^*}}-\psi^{m^*}\right) \varphi_{j|S_m} \ dS.
\end{displaymath}
If now we assume $\bar{\bar{h}}_{k|S_{m^*}}=\psi^m\neq 0$ we obtain through the constraint equation $\bar{\bar{h}}_k=0$, which is an absurd. If there are only two fractures in the network, a similar conclusion can be derived, since at least one of the two fractures has a non empty portion of the Dirichlet boundary. Then we have $z_k^T \bm{\mathcal{G}}z_k\geq \|\bar{\bar{h}}_{k|S_{m^*}}-\psi^m\|^2>0$ for all $N_{\Lambda}+1\leq k\leq N_{\Psi}$.

Thus, for any $k=1,\ldots, N_{\Lambda}+N_{\Psi}$, $z_k\not\in\ker{(\bm{\mathcal{G}})}$ and the vector space $\mathcal{Z}=\text{span}\{z_1,\ldots,z_{N_{\Lambda}+N_{\Psi}}\}$ is a subspace of $\text{Im}(\bm{\mathcal{G}})$. For each $y\in \mathcal{Z}$ we have $y=\bm{Z}v$, for $v\in\mathbb{R}^{N_{\Lambda}+N_{\Psi}}$ and we can therefore conclude that $y^T\bm{\mathcal{G}}y>0$, or equivalently $v^T\bm{Z}^T\bm{\mathcal{G}}\bm{Z}v>0$.
\end{proof}
The proof of Proposition \ref{discretewellposedness} follows from Lemma \ref{lemma1} and classical arguments of quadratic programming.

\section{Problem resolution}
\label{prob_solve}
\newcommand{\Ghat}{\hat{\bm{G}}}
Solving the KKT-system \eqref{KKT} in order to compute an approximation of the hydraulic head in $\Omega$ might not be a viable option for large networks, for which matrix $\bm{M}_{\text{KKT}}$ would be extremely large and, likely, ill-conditioned. It is convenient, instead, to solve the unconstrained minimization problem \eqref{Jstar} via a gradient method, which also results in an algorithm well suited for parallel implementation on parallel computing machines. 
Let us rewrite the cost functional (\ref{Jstar}) in a compact form as 
\begin{equation}
\tilde{J}^*=w^T\hat{\bm{G}}w+2d^Tw+q^T \begin{bmatrix}
\bm{A}^{-T}\bm{G^h}\bm{A}^{-1}\end{bmatrix}q, \label{Jcompact}
\end{equation}
where $w=[\lambda^T,\psi^T]^T$, and let us observe that $\nabla \tilde{J}^*=\Ghat w+d$.\\
\begin{algorithm}
	\caption{Preconditioned conjugate gradient method applied to $\hat{\bm{G}}w+d=0$}
	\label{gradcon_prec}
	\begin{algorithmic}[1]
		\STATE Guess $w_0=[\lambda_0^T, \psi_0^T]^T$\\
		\STATE $r_0=\hat{\bm{G}}w_0+d$
		\STATE solve $\bm{P}z_0=r_0$\\
		\STATE  set $\delta w_0=-z_{0}$ and $k=0$;
		\WHILE{$\|r_k\|>0$}
		\STATE $\zeta_k=\cfrac{r_k^Tz_k}{\delta w_{k}^T\hat{\bm{G}}\delta w_{k}}$;\label{step1}\\
		\STATE $w_{k+1}=w_k+\zeta_k\delta w_{k}$;\\
		\STATE $r_{k+1}=r_k+\zeta_k\hat{\bm{G}}\delta w_{k}$; \label{step2}\\ 
		\STATE solve $\bm{P}z_{k+1}=r_{k+1}$;\label{stepp}\\
		\STATE $\beta_{k+1}=\cfrac{r_{k+1}^Tz_{k+1}}{r_{k}^Tz_{k}}$;\\
		\STATE $\delta w_{k+1}=-z_{k+1}+\beta_{k+1}\delta w_k$;\\
		\STATE $k=k+1$;
		\ENDWHILE
	\end{algorithmic}
\end{algorithm}
Algorithm~\ref{gradcon_prec} reports the steps of the application of the preconditioned conjugate gradient scheme to the resolution of $\Ghat w+d=0$, with a preconditioner $\bm{P}$. It is to remark that, for any vector $w=\left[\lambda^T,\psi^T \right]^T,$ $\lambda\in\mathbb{R}^{N_\Lambda}$, $\psi\in\mathbb{R}^{N_\Psi}$, the computation of $\Ghat w$, as at steps \ref{step1}, \ref{step2} of Algorithm~\ref{gradcon_prec}, does not require the inversion of matrix $\bm{A}$. In particular it only involves the resolution of linear systems defined independently on each fracture in $\Omega$, which, therefore, can be performed in parallel. Indeed, setting 
\begin{displaymath}
h=\bm{A}^{-1}(\bm{\mathcal{B}}\lambda+\bm{\mathcal{C}} \psi), \qquad
p=\bm{A}^{-T}(\bm{G^h} h-\bm{\mathcal{C}}\psi),
\end{displaymath}
which can be computed locally on the fractures thanks to the structure of the involved matrices, we have:
\begin{displaymath}
\Ghat w= 
\begin{bmatrix}
\bm{\mathcal{B}}^T h\\
\bm{G^\psi}\psi+\bm{\mathcal{C}^T} p-\bm{\mathcal{C}}^T h
\end{bmatrix}.
\end{displaymath}
The choice of preconditioner $\bm{P}$ is of great importance for the performances of the method. Given the structure of matrix $\Ghat$ in \eqref{matrice}, neglecting off-diagonal terms and simplifying the structure of the bottom-right term, a possible choice is the following:
\begin{equation}
\bm{P_f}=\begin{bmatrix}
\bm{\mathcal{B}}^T\bm{A}^{-T}\bm{G^h}\bm{A}^{-1}\bm{\mathcal{B}} & \bm{\mathcal{O}}\\
\bm{\mathcal{O}} & \bm{G^{\psi}}
\end{bmatrix}\label{Pfull}
\end{equation}
which provides very good results, as shown in the following section. Unfortunately the efficient, parallel, application of such preconditioner, such as at step~\ref{stepp} of Algorithm~\ref{gradcon_prec}, would require inner loops of a gradient based scheme, analogously to what done to solve the main problem. For this reason a new preconditioner is introduced, further simplifying the structure of $\Ghat$, and preconditioner $\bm{P_f}$ is retained only as a term of comparison. The new preconditioner is defined only extracting $M$ block-diagonal terms of size $N^m_\Lambda$, $m=1,\ldots,M$ from matrix $\bm{\mathcal{D}}:=\bm{\mathcal{B}}^T\bm{A}^{-T}\bm{G^h}\bm{A}^{-1}\bm{\mathcal{B}}$: denoting by $N^{[m]}_\Lambda=\sum_{\ell=1}^{m} N^\ell_\Lambda$, matrix $\bm{\mathcal{D}_m}$ is obtained taking the elements at rows and columns $N^{[m-1]}_\Lambda, \ldots, {N}^{[m]}_\Lambda$ of $\bm{\mathcal{D}}$, and:
\begin{equation}
\bm{P_d}=\begin{bmatrix}
\text{diag}(\bm{\mathcal{D}_1},\ldots,\bm{\mathcal{D}_M})  & \bm{\mathcal{O}}\\
\bm{\mathcal{O}} & \bm{G^{\psi}}
\end{bmatrix}.\label{Pdiag}
\end{equation}

\section{Numerical results}
\label{NumRes}
\newcommand{\dlam}{\bm{\delta_\lambda}}
\newcommand{\dpsi}{\bm{\delta_\psi}}
\newcommand{\dhi}{\bm{\delta_{h}}}
\newcommand{\EhL}{\mathcal{E}^h_{L^2}}
\newcommand{\EhH}{\mathcal{E}^h_{H^1}}
\newcommand{\Elam}{\mathcal{E}^{\lambda}_{L^2}}
Here some numerical results are reported to describe the behavior of the proposed numerical method. Three different networks of increasing complexity are considered: first the hydraulic head is computed on a small network of three fractures, comparing the obtained solution to the available known exact solution; then a slightly bigger network of ten fractures is analyzed in order to highlight and discuss the characteristics of the method in a more realistic, yet synthetic, framework, and finally some results are presented on a complex network counting slightly less than four hundred fractures, obtained starting from realistic input data. More details on the networks used in the simulations are reported in Table~\ref{DFNinfo}.

First order Lagrangian finite elements are used to describe the hydraulic head on the fractures, on meshes of triangular elements non conforming to the traces and independently built on each fracture. Additional enrichment functions are used on the elements intersected by the traces, according to the eXtended Finite Element framework (see \cite{BPSb}), in order to reproduce jumps of the co-normal derivative at fracture intersections on the non conforming mesh.  On each trace $S_m$, $m\in \mathcal{M}$, a mesh is defined and piece-wise constant basis functions are used for the discretization of control variables $\Lambda^m$,  and, independently, another mesh is introduced and piece-wise linear continuous basis functions are used for functions $\Psi^m$. Clearly, different choices for the basis functions of the various variables are possible, the proposed ones being the more natural given the expected regularity of the solution.
It is to remark that the flexibility and robustness in handling non-conforming and independently built discretizations on each fracture and on each trace of the network, for each of the variables involved, is a key aspect of the method, which allows to easily deal with arbitrarily complex geometries without any need of geometrical modification of the DFN. 

The refinement level of the triangular mesh on each fracture is expressed by means of a grid parameter $\bm{\delta_{h}}$, expressing the maximum element area of mesh elements requested on each fracture. Clearly a different grid parameter could be used on each fracture, even if here, for simplicity, a single value is adopted. The refinement level of the meshes on the traces is controlled by two parameters $\bm{\delta_\lambda}$ and $\bm{\delta_\psi}$ representing the ratio between the number of mesh elements on the traces, for $\Lambda$ and $\Psi$ respectively, and the number of elements of the mesh induced by the intersections of the trace with the edges of the triangular mesh. Unique values are used  for $\bm{\delta_\lambda}$ and $\bm{\delta_\psi}$ for all the traces in the network, but different choices are possible.

  \begin{table}
 	\begin{center}
 		\caption{Geometrical details of the considered networks}
 		\label{DFNinfo}
 		\begin{tabular}{|c|c|c|c|c|c|}
 			\hline 
 			 & & & \multicolumn{3}{c|}{Traces per fracture}\\
 			 \cline{4-6}
 			  & Fractures & Traces & average  & min & max\\ 
 			\hline 
 			\textbf{DFN3}  & 3 & 3 &2 &2 &2 \\ 
 			\hline 
 			\textbf{DFN10}  & 10 & 14 & 2.8 & 1 & 5  \\ 
% 			\hline 
% 			\textbf{DFN59}  & 59 & 157 & 5.32 &1 & 15 \\ 
 			\hline 
 			\textbf{DFN395}  & 395 & 629 & 3.18 &1 & 19 \\ 
 			\hline 
 		\end{tabular} 
 	\end{center}
 \end{table}
% \begin{table}[H]
% 	\begin{center}
% 		\caption{Information about the DFNs on which the method was tested.}
% 		\label{DFNinfo}
% 		\begin{tabular}{|c|c|c|c|c|c|}
% 			\hline 
% 			  & Fractures & Traces & \begin{tabular}{@{}c@{}}\textbf{Avg traces} \\ \textbf{/fracture}\end{tabular}  & \begin{tabular}{@{}c@{}}\textbf{Min traces} \\ \textbf{/fracture}\end{tabular} & \begin{tabular}{@{}c@{}}\textbf{Max traces} \\ \textbf{/fracture}\end{tabular}\\ 
% 			\hline 
% 			\textbf{DFN3}  & 3 & 3 &2 &2 &2 \\ 
% 			\hline 
% 			\textbf{DFN10}  & 10 & 14 & 2.8 & 1 & 5  \\ 
%% 			\hline 
%% 			\textbf{DFN59}  & 59 & 157 & 5.32 &1 & 15 \\ 
% 			\hline 
% 			\textbf{DFN395}  & 395 & 629 & 3.18 &1 & 19 \\ 
% 			\hline 
% 		\end{tabular} 
% 	\end{center}
% \end{table}
\subsection{Three fracture DFN problem}
Let us consider the connected domain $\Omega$ shown in Figure~\ref{3fractparaview}, given by the union of three planar fractures defined by
\begin{align*}
&F_1=\left\lbrace (x,y,z) \in \mathbb{R}^3 : -1\leq x\leq 1/2,~-1\leq y \leq 1,~z=0\right\rbrace \\
&F_2=\left\lbrace (x,y,z) \in \mathbb{R}^3 : -1\leq x\leq 0,~y=0,~-1\leq z \leq 1\right\rbrace \\
&F_3=\left\lbrace (x,y,z) \in \mathbb{R}^3 : x=-1/2,~-1\leq y\leq 1,~-1\leq z \leq 1\right\rbrace.
\end{align*}
which intersect forming three traces $S_1=F_1\cap F_2$, $S_2=F_1\cap F_3$ and  $S_3=F_2\cap F_3$. This problem is labeled DFN3. The known hydraulic head distribution $H^{\text{ex}}$ in $\Omega$ is given by
\begin{align}
&H_1^{\text{ex}} (x,y) =\frac{1}{10}\left( - x -\frac{1}{2}\right) \left(8xy(x^2 + y^2)\text{atan2} ( y , x )+ x^3\right) , \label{sol1}\\
&H_2^{\text{ex}} ( x , z ) =\frac{1}{10}\left( - x -\frac{1}{2}\right)x^3 -
\frac{4}{5}\pi\left( - x -\frac{1}{2}\right)
x^3 | z |,\label{sol2}\\
&H_3^{\text{ex}} ( y , z ) = ( y - 1 ) y ( y + 1 )( z - 1 ) z \label{sol3}
\end{align}
being $\text{atan2(y,x)}$ the four quadrant inverse tangent function, and is the solution of the following problem:
\begin{align*}
&-\nabla\cdot\left(\nabla H\right) = -\nabla\cdot\left(\nabla H^{\text{ex}}\right), &\text{in} \ \Omega\setminus \mathcal{S},\\
&H_{|\partial \Omega}=H_{|\partial \Omega}^{\text{ex}}, &\text{on} \ \partial \Omega,
\end{align*}
with additional conditions of continuity and flux conservation at the traces. 
\begin{figure}
	\centering
	\includegraphics[width=0.34\linewidth]{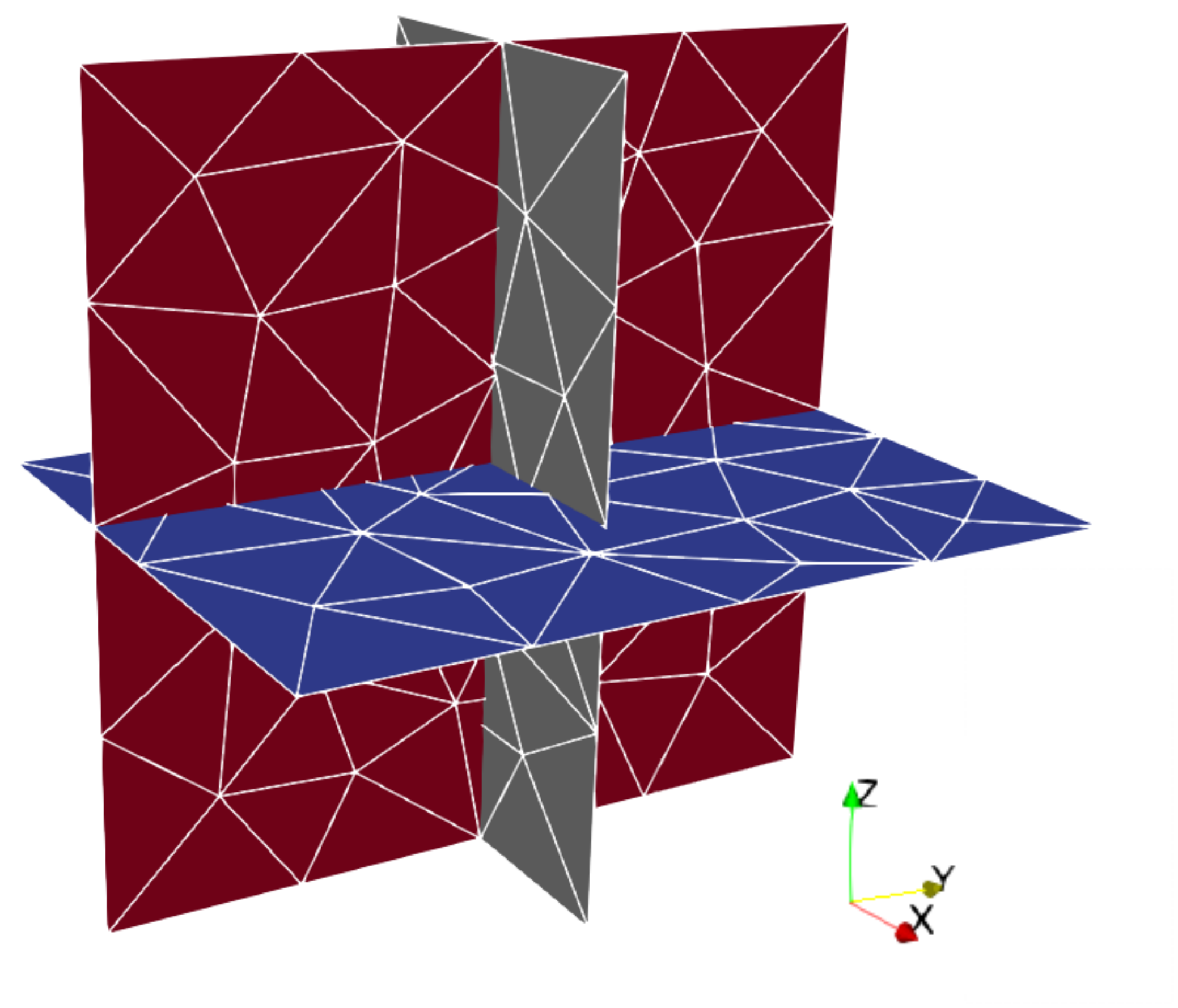}
	\caption{DFN3: DFN configuration.}
	\label{3fractparaview}
\end{figure}
Given the small size of the network, the discrete solution is obtained solving the KKT-system \eqref{KKT}. Five different meshes with an increasing number of elements are considered for the hydraulic head on the fractures, with the mesh parameter $\bm{\delta_{h}}$ ranging between 0.02 and $8\times 10^{-5}$, and nine  values of $\dlam$ and $\dpsi$ are used, both ranging between $0.1$ and $0.9$. The coarsest computational mesh on the fractures is reported in Figure~\ref{3fractparaview}, highlighting the non conformity at fracture intersections. An example solution on the three fractures is reported in Figure~\ref{sol3fract}, for mesh parameters $\bm{\delta_{h}}=0.005$, $\dlam=0.5$ and $\dpsi=0.3$ showing the irregular behavior of the solution across the trace. The use of the XFEM allows to correctly reproduce the jumps of the gradient in the direction normal to the traces even if traces arbitrarily cross mesh elements.  

We computed errors $\mathcal{E}^h_{L^2}$ and $\mathcal{E}^h_{H^1}$ measuring the $L^2(\Omega)$ and $H^1(\Omega)$ norms, respectively, of the relative difference between the numerical and analytical solution for the hydraulic head on the fractures. Error $\mathcal{E}^{\lambda}_{L^2}$ is also computed, expressing the $L^2(\mathcal{S})$ norm of the relative difference between the analytical jump of the fluxes at the traces and the computed value of $\lambda$. The other mesh parameters are fixed with values $\dlam=0.5$ and $\dpsi=0.3$. The behavior of these errors is reported in Figure~\ref{error}: $\EhL$ and $\EhH$ are shown on the left for an increasing number of fracture hydraulic head DOFs and $\Elam$ on the right, for an increasing number of $\lambda$ DOFs on the traces. The expected convergence trend is obtained for $\EhL$ and $\EhH$, despite the non conforming mesh thanks to the use of the XFEM, and the expected convergence trend is obtained also for $\Elam$.

\begin{figure}
	\centering
	\includegraphics[width=43mm]{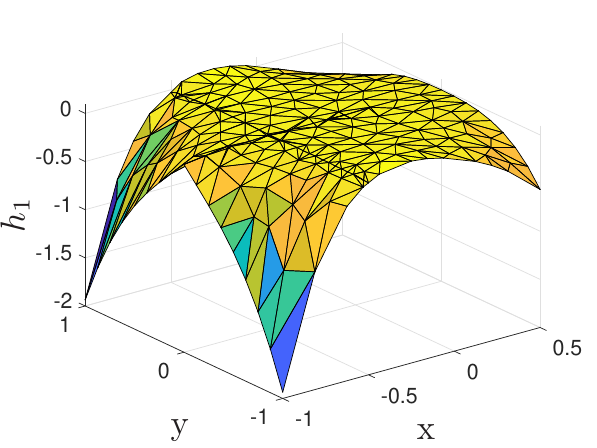}% 
	\includegraphics[width=43mm]{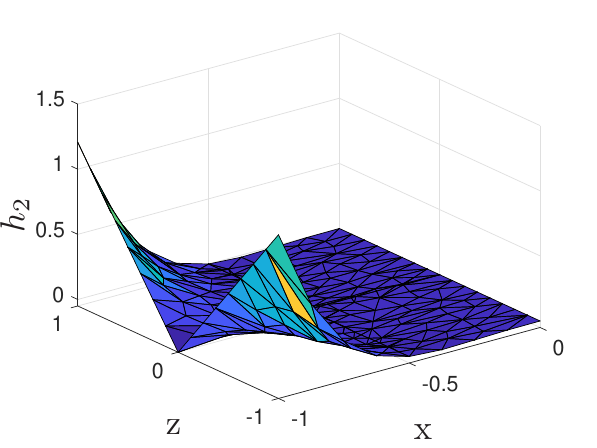}%
	\includegraphics[width=43mm]{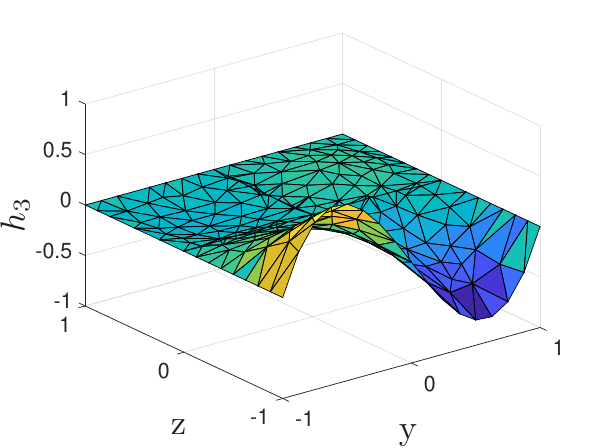}
	\caption{DFN3: hydraulic head computed on the fractures. Parameters: $\dhi=0.0050$, $\dlam=0.5$, $\dpsi=0.3$.}
	\label{sol3fract}
\end{figure}
\begin{figure}
	\centering
	\includegraphics[width=0.5\linewidth]{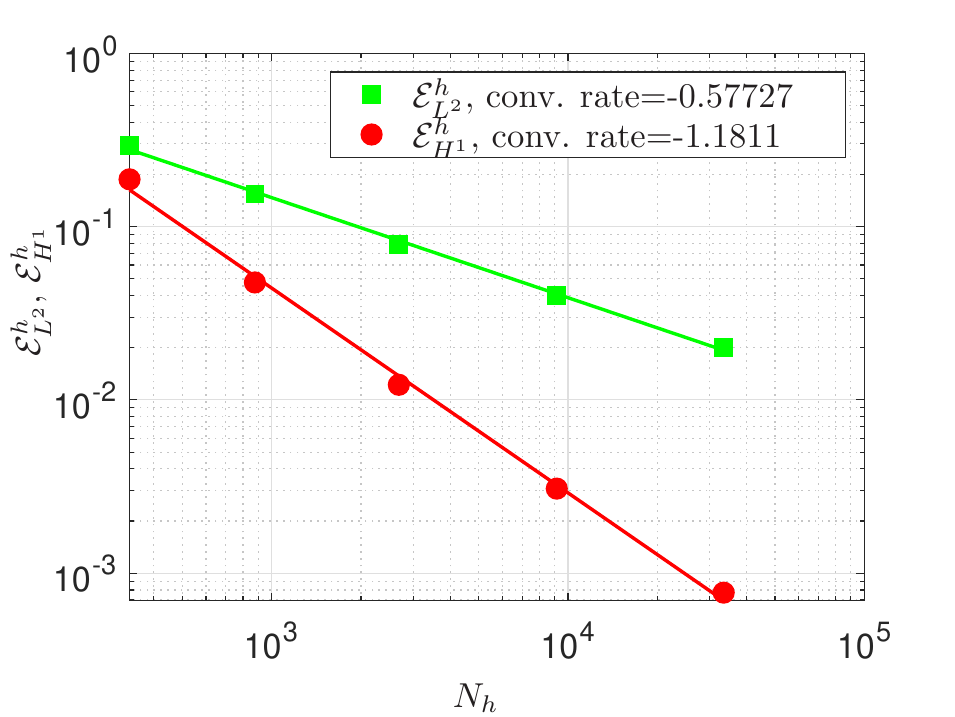}%
	\includegraphics[width=0.5\linewidth]{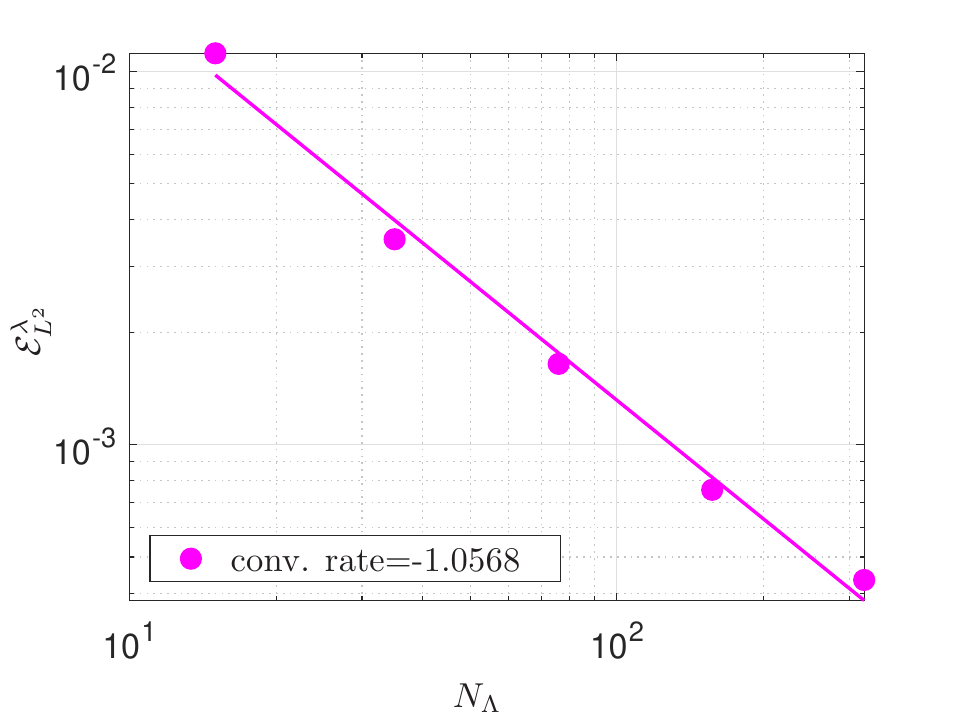}
	\caption{DFN3: on the left, $\mathcal{E}^h_{L^2}$ and $\mathcal{E}^h_{H^1}$ errors under mesh refinement on the fractures; on the right, $\mathcal{E}^{\lambda}_{L^2}$ error under subsequent mesh refinement on the traces. Other Parameters= $\dlam=0.5$, $\dpsi=0.3$.}
	\label{error}
\end{figure}
\begin{figure}
	\centering
	\includegraphics[width=66mm]{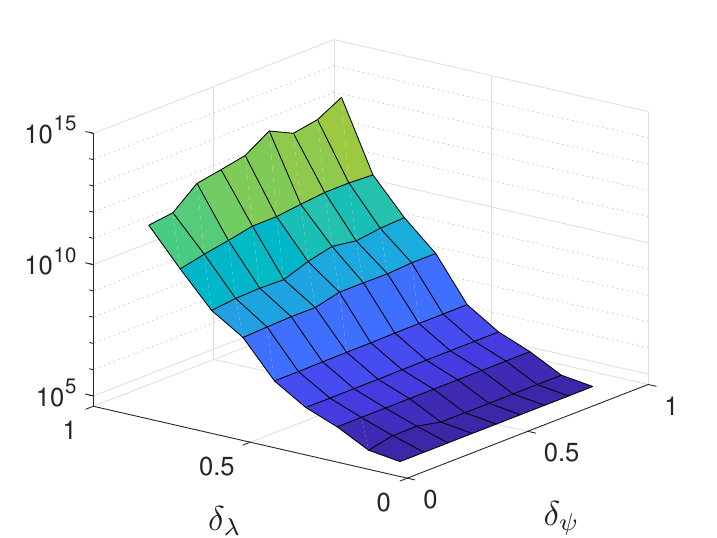}
	\caption{DFN3: Condition number of the matrix $\Mkkt$ varying the parameters $\dlam$ and $\dpsi$. $\dhi=0.0013$.}
	\label{condKKT}
\end{figure}

\begin{figure}
	\centering
	\includegraphics[width=0.45\linewidth]{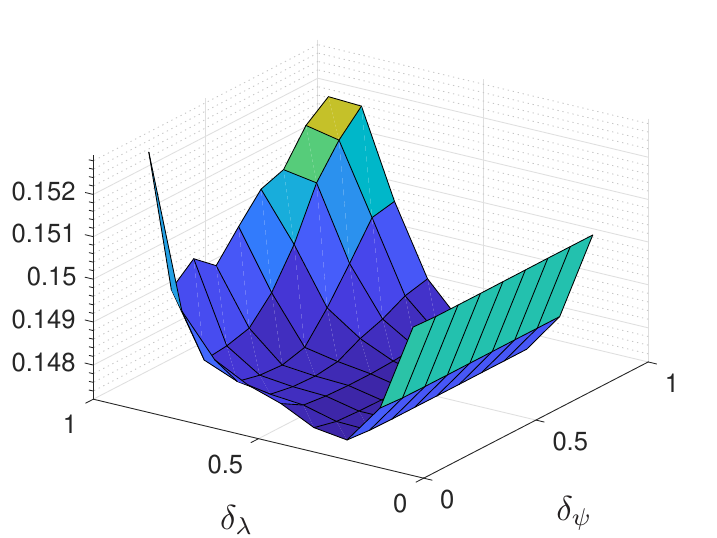}%
	\includegraphics[width=0.45\linewidth]{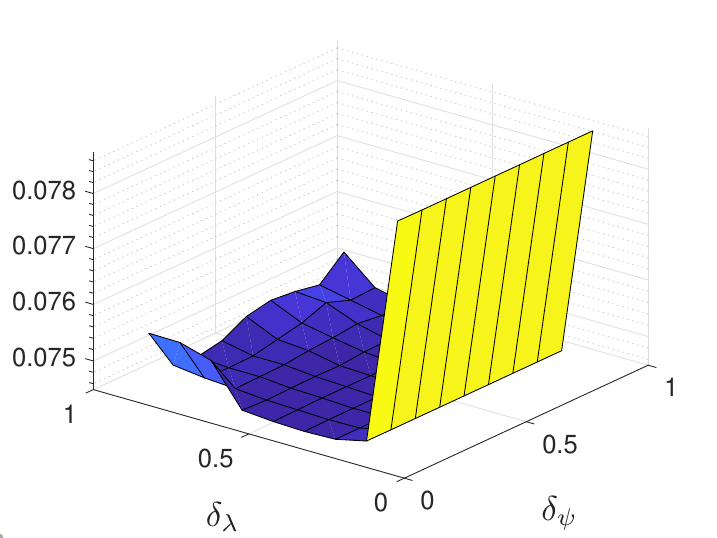}
	\caption{DFN3: Error $\EhH$ varying $\dlam$ and $\dpsi$; $\dhi=0.0050$ on the left and $\dhi=0.0013$ on the right.}
	\label{err_h_200_800}
\end{figure}
\begin{figure}
	\centering
	\includegraphics[width=0.45\linewidth]{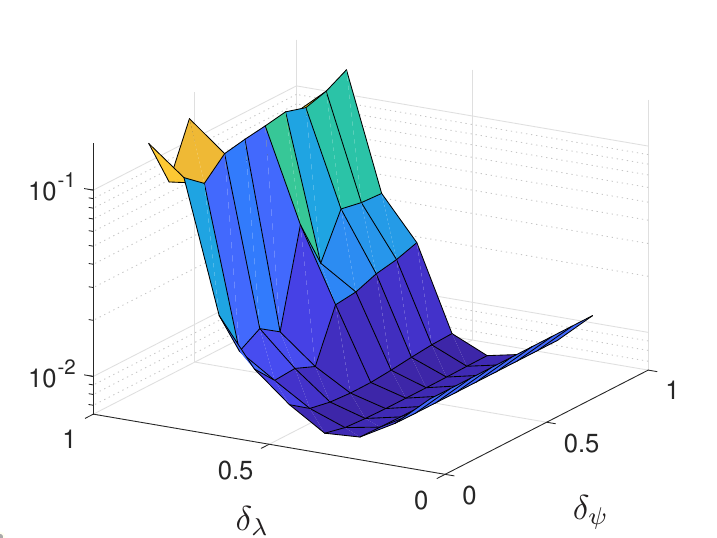}%
	\includegraphics[width=0.45\linewidth]{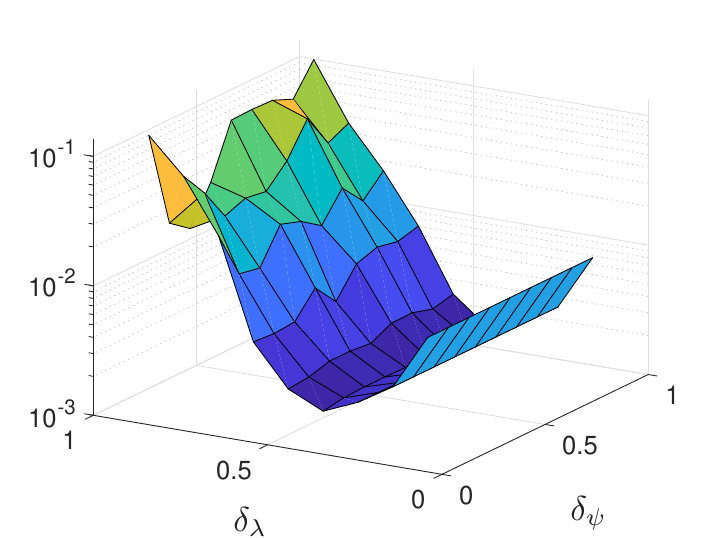}
	\caption{DFN3: Error $\Elam$ varying $\dlam$ and $\dpsi$; $\dhi=0.0050$ on the left and $\dhi=0.0013$ on the right.}
	\label{err_lam_200_800_grad}
\end{figure}

\begin{figure}
\centering
\includegraphics[width=0.45\linewidth]{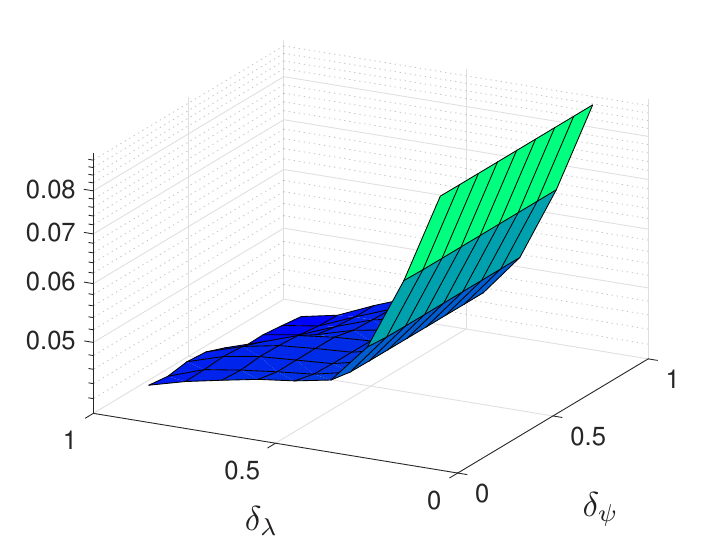}%
\includegraphics[width=0.45\linewidth]{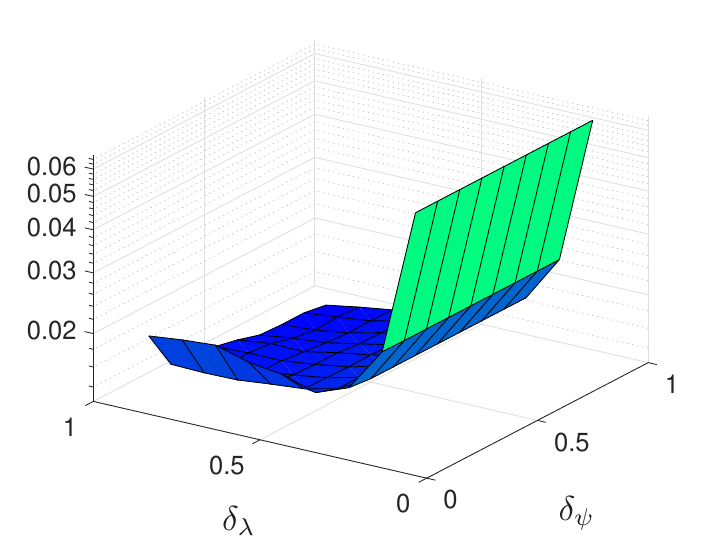}
\caption{DFN3: Error indicator $\Delta_{\mathcal{S}}^h$ varying $\dlam$ and $\dpsi$; $\dhi=0.0050$ on the left and $\dhi=0.0013$ on the right.}
\label{continuitaHH_200_800}
\end{figure}

The effect of the choice of parameters $\dlam$ and $\dpsi$ is also investigated in terms of their influence on the conditioning of the KKT-system and on the accuracy of the solution. Figure~\ref{condKKT} shows the the norm-1 condition number of the KKT matrix for different values of $\dlam$ and $\dpsi$, both ranging between $0.1$ and $0.9$. System conditioning appears to be more affected by parameter $\dlam$, whereas its dependence on $\dpsi$ is almost negligible, especially for the smaller values $\dlam$.
Figures~\ref{err_h_200_800}-\ref{continuitaHH_200_800} show how parameters $\dlam$ and $\dpsi$ impact the quality of the obtained solution for two different values of $\dhi$, $\dhi=0.003$ on the left and $\dhi=7.5\times 10^{-4}$ on the right for all the figures. 
%The values relative to these figures are obtained solving the problem via the PCG scheme in Algorithm~\ref{gradcon_prec}, in order to limit the effect of ill-conditioning at the higher $\dlam$ values. 
Figure~\ref{err_h_200_800} reports the behavior of error $\EhH$, which appears weakly affected by variations of both the two parameters; a slightly more marked impact of $\dlam$  is observed on the coarsest mesh with a minimum of $\EhH$ for $\dlam$ around $0.5$. This is motivated by the fact that low values of $\dlam$ provide a poor approximation of the flux on the traces which has a detrimental impact on the solution, whereas, when $\dlam$ approaches $0.9$, the solution is affected by the higher conditioning of the system. In Figure~\ref{err_lam_200_800_grad} the trend of error $\Elam$ is described, highlighting, as expected a stronger dependence of this error from $\dlam$, and also an almost no-dependence from $\dpsi$. Again, a minimum of $\Elam$ is observed for values of $\dlam$ arond $0.5$, probably again for the effects of system conditioning at the higher values of this parameter. 
The quantity $\Delta_\mathcal{S}^h$ is now introduced to measure the quality of the hydraulic head solution on the traces, defined as:
\begin{equation}
\Delta_\mathcal{S}^h=\frac{\sqrt{\sum_{m \in \mathcal{M}}\limits||h_i-h_j||^2_{L^2(S_m)}}}{h_{\max} l_{\text{tot}}}, \quad i,j \in I_{S_m},
\label{sommacont}
\end{equation}
being $h_{\max}$ the maximum value of the hydraulic head in $\Omega$ and $l_{\text{tot}}$ the total trace length. Recalling that the continuity of the solution is enforced through the minimization of functional \eqref{Jtilde} by means of the control variable $\psi$, the quantity $\Delta_\mathcal{S}^h$ is an error indicator on the actual continuity achieved by the method across the traces. Local flux conservation is instead intrinsically enforced by the method through the definition of a unique variable for flux jump on the two fractures meeting at each trace. 
In Figure~\ref{continuitaHH_200_800} the behavior of this error indicator is reported. A strong influence of $\dlam$ is again noticed, whereas $\dpsi$ has a minor effect, more evident at high values of $\dlam$. In this case higher values of the parameters provide, in general lower values of $\Delta_\mathcal{S}^h$. Finally, comparing the left and the right pictures of Figures~\ref{err_h_200_800}-\ref{continuitaHH_200_800} we can see that a reduction of the errors and of the error indicator are obtained through a refinement of the mesh.

The effect of conditioning of the KKT-system matrix are actually mitigated by solving the problem via the PCG-solver in Algorithm~\ref{gradcon_prec}, which is actually an application of the null-space method proposed in \cite{Pestana2016} to the saddle-point problem \eqref{KKT}.

\subsection{Ten fracture DFN problem}
A slightly more complex network of $10$ fractures and $14$ traces is now considered, as shown in Figure~\ref{10fractparaview}, and labeled DFN10. The DFN problem is solved on this network using a uniform unitary value of transmissivity for all fractures and with a prescribed unitary head drop between two selected fracture edges, as marked in Figure~\ref{10fractparaview}, and homogeneous Neumann boundary conditions on all other edges. These boundary conditions allow to identify an inflow and an outflow portion of the boundary, as it usually happens in realistic configurations. 
\begin{figure}
	\centering
	\includegraphics[width=0.50\linewidth]{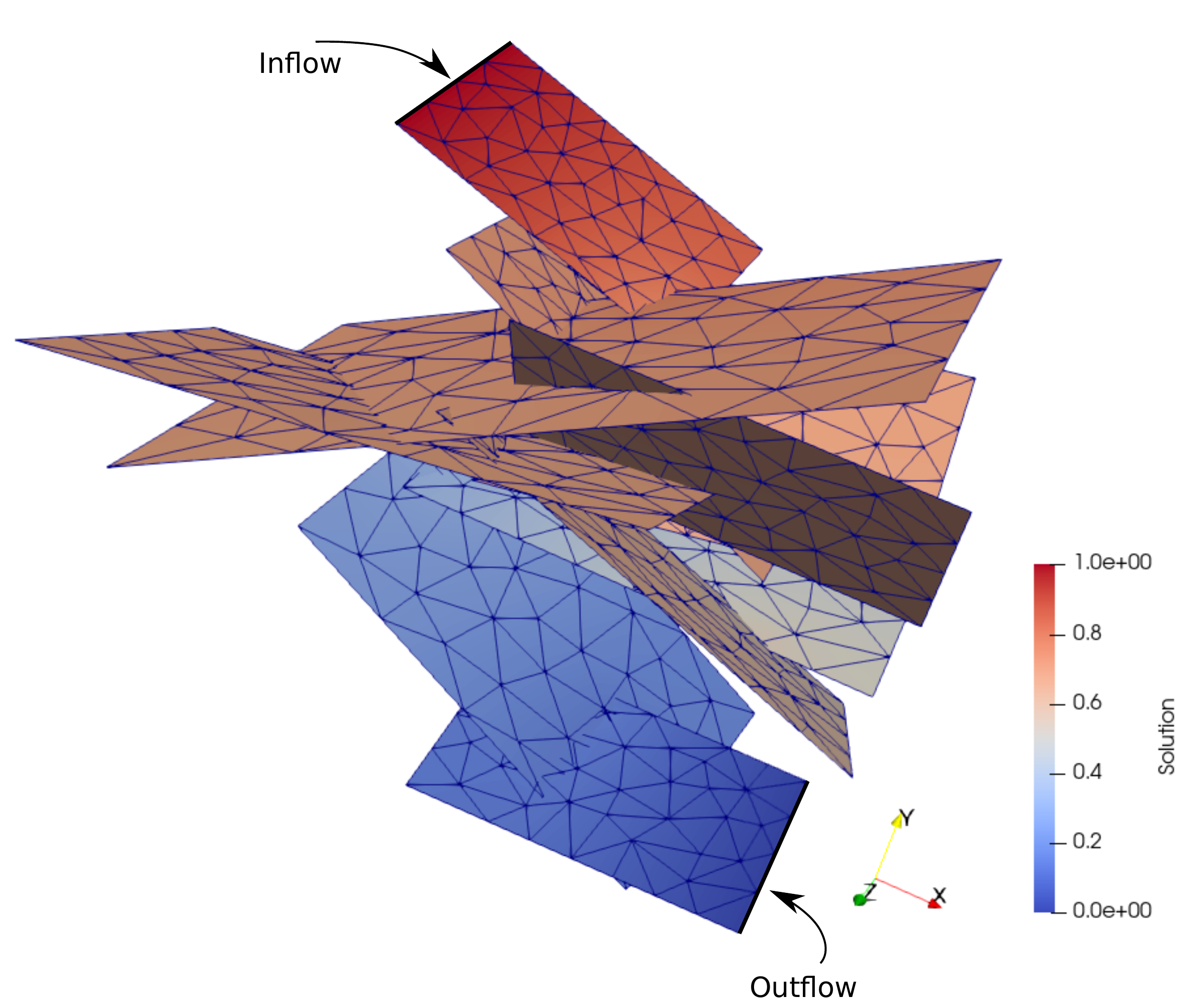}
	\caption{DFN10: DFN configuration}
	\label{10fractparaview}
\end{figure}
An example solution, obtained with the PCG solver, is reported in Figure~\ref{10fractparaview}, along with the non-conforming computational mesh, obtained with $\dhi=0.0011$, $\dlam=0.5$, $\dpsi=0.3$. Figure~\ref{continuitaHH_200_800_DFN10} shows the behavior of the error indicator $\Delta_{\mathcal{S}}^h$ at varying of $\dlam$ and $\dpsi$, on two different meshes, a coarse mesh on the left, with $\dhi=0.0011$ and a fine mesh on the right, with $\dhi=2.7\times 10^{-4}$. As previously noticed, the quantity $\Delta_{\mathcal{S}}^h$ is primarily sensible to variations of parameter $\dlam$, with a decreasing trend for increasing values of $\dlam$. Parameter $\dpsi$ has a minor effect, with a decreasing trend for increasing values of $\dpsi$, more relevant at the higher values of $\dlam$. 
%Refining the mesh on the fractures also reduces the error indicator $\Delta_{\mathcal{S}}^h$, as can be seen comparing the left and the right image in the figure. 

Another error indicator can be introduced, for this configuration, measuring the global flux mismatch between the inflow and the outflow boundary, defined as:
\begin{equation}
\Delta_{\text{in-out}}^\phi = \frac{|\phi_\text{in}-\phi_\text{out}|}{\phi_\text{in}}
\end{equation}
where $\phi_\text{in}$/$\phi_\text{out}$ is the absolute value of the net flux entering/leaving the network through the inflow/outflow boundary. Given the local flux conservation properties of the method at each trace, this quantity is an error indicator of the global conservation properties. The behavior of $\Delta_{\text{in-out}}^\phi$, varying $\dlam$ and $\dpsi$ in the range $[0.1,0.9]^2$, is shown in Figure~\ref{fluxinout200_800}, for two values of $\dhi$, with $\dhi=0.0011$ on the left and $\dhi=2.7\times 10^{-4}$ on the right. It can be seen that the global flux mismatch appears to be affected by variations of $\dlam$, with a generally decreasing trend for increasing values of this parameter, but also a relevant influence from $\dpsi$ appears in this case, mainly at the higher values of $\dlam$, with a decreasing trend for $\Delta_{\text{in-out}}^\phi$ for increasing values of $\dpsi$. The quantity $\Delta_{\text{in-out}}^\phi$  can be reduced also refining the fracture mesh. 
%Values of $\dlam$ of about $0.5$ and $\dpsi$ of about $0.3$ appear a good choice to keep conditioning low and to reduce the computational cost without compromising solution accuracy.

\begin{figure}
	\centering
	\includegraphics[width=0.45\linewidth]{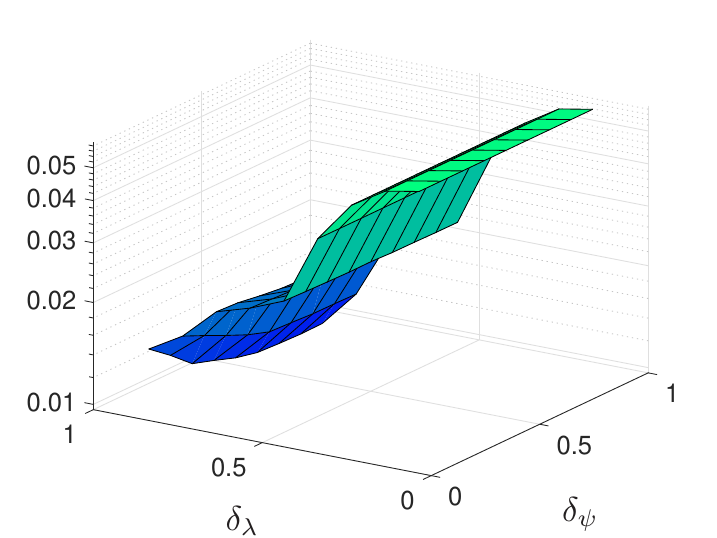}%
	\includegraphics[width=0.45\linewidth]{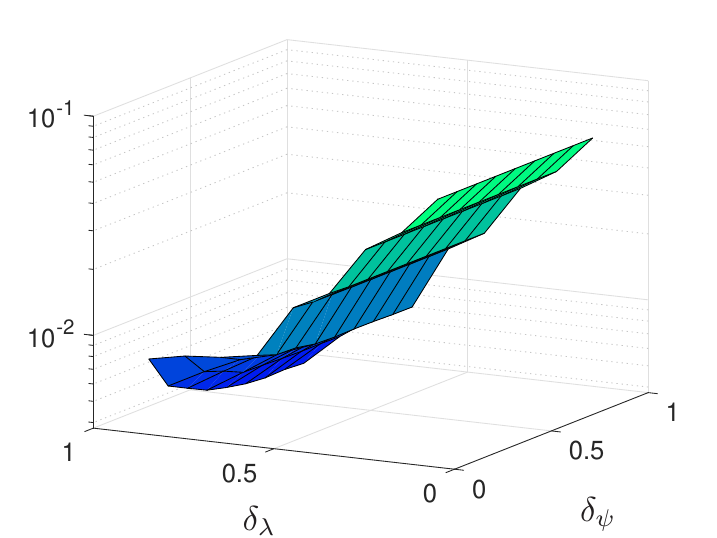}
	\caption{DFN10: Error indicator $\Delta_{\mathcal{S}}^h$ varying $\dlam$ and $\dpsi$; $\dhi=0.0011$ on the left and $\dhi=2.7\times 10^{-4}$ on the right.}
	\label{continuitaHH_200_800_DFN10}
\end{figure}

\begin{figure}
	\centering
	\includegraphics[width=0.45\linewidth]{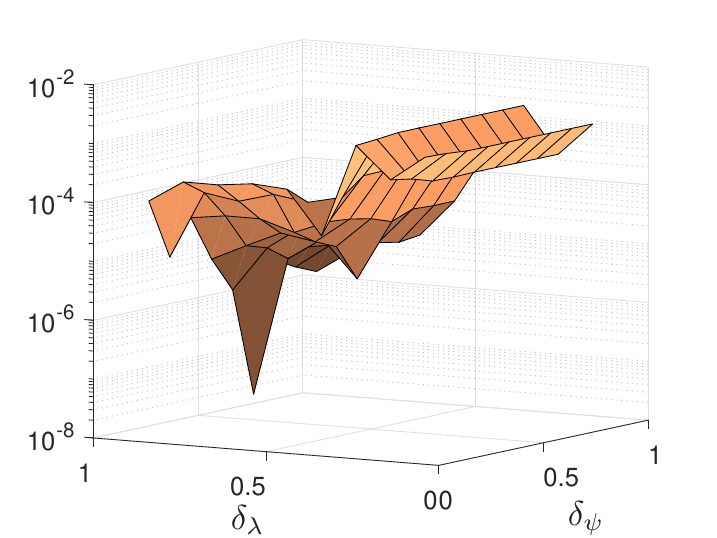}%
	\includegraphics[width=0.45\linewidth]{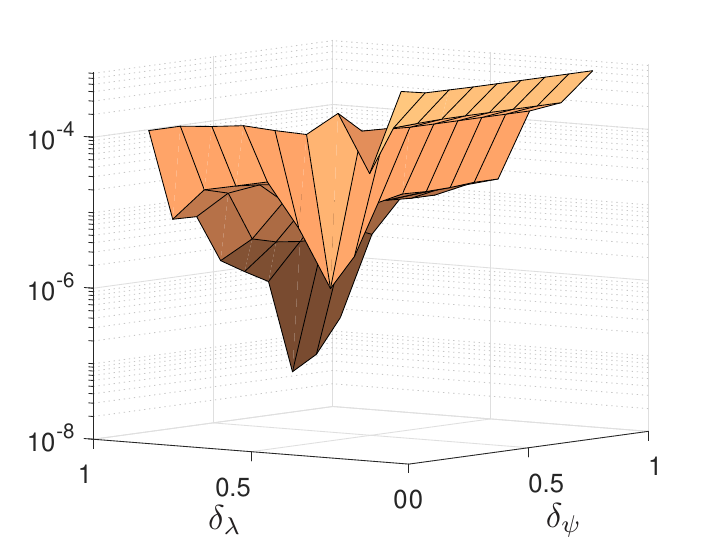}
	\caption{DFN10: Error indicator $\Delta_{\text{in-out}}^\phi$ varying $\dlam$ and $\dpsi$; $\dhi=0.0011$ on the left and $\dhi=2.7\times 10^{-4}$ on the right.}
	\label{fluxinout200_800}
\end{figure}

A study on the performances of preconditioners is proposed on this network. Table~\ref{preconditioners} reports the number of iterations required by the preconditioned conjugate gradient scheme to reduce the relative residual up to $10^{-6}$, for the non-preconditioned case and for preconditioners $\bm{P_f}$ and $\bm{P_d}$ described at the end of Section~\ref{prob_solve}, for four values of $\dhi$ and $\dlam=0.5$, $\dpsi=0.3$. We can see that using preconditioner $\bm{P_f}$, the number of iterations required to reach the required residual is almost unaffected by the value of $\dhi$ and is only about $3.5\%$ of the number of iterations of the non preconditioned case on the finest mesh. The performances of the block-diagonal preconditioner $\bm{P_d}$, suitable for efficient parallel implementation, are slightly worse than the ones relative to preconditioner $\bm{P_f}$, but still only marginally affected by mesh refinement and capable of reducing the iteration to convergence to about $8.6\%$ of the number of iterations of the non preconditioned case on the finest mesh. The sparsity patterns of the full matrix $\hat{\bm{G}}$ in \eqref{matrice} and of $\bm{P_f}$ and $\bm{P_d}$ for mesh parameters $\dhi=0.0011$, $\dlam=0.5$, $\dpsi=0.3$ are shown in Figure~\ref{spy}.

\begin{table}
	\begin{center}
		\caption{DFN10: Number of iterations of PCG algorithm with different preconditioners and mesh refinement; $\dlam=0.5$, $\dpsi=0.3$.}
		\label{preconditioners}
		\begin{tabular}{|c|c|c|c|c|}
			\hline 
			 $\dhi$ &  $\bm{N_{\Lambda}+N_{\psi}}$ & \textbf{non prec.}&  $\bm{P_{f}}$ &  $\bm{P_{d}}$ \\ 
			\hline 
			$797$  & 69 & 147 & 21 & 49 \\ 
			\hline 
			$3120$   & 125 & 234 & 22 & 55  \\ 
			\hline 
			$12428$   & 252 & 457 & 23 & 58 \\ 
			\hline 
			$49784$   & 502 & 717 & 25 & 62\\ 
			\hline
		\end{tabular} 
	\end{center}
\end{table}
\begin{figure}
	\centering
	\includegraphics[width=0.37\textwidth]{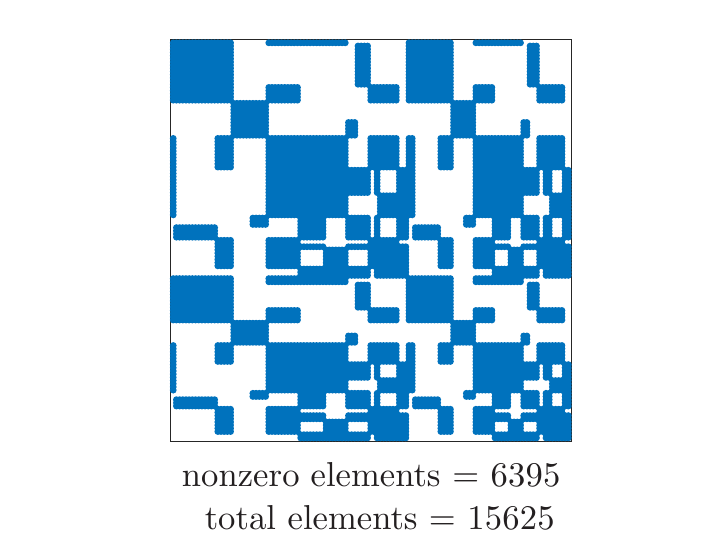}%
	\hspace{-1cm}
	\includegraphics[width=0.37\textwidth]{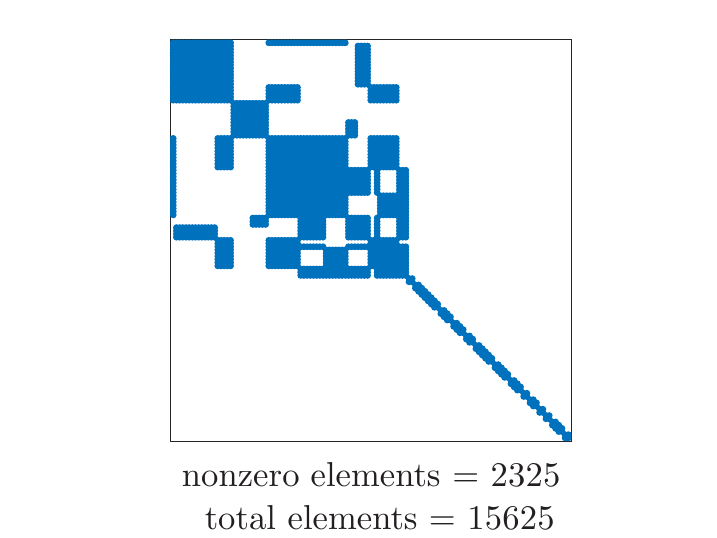}%
	\hspace{-1cm}
	\includegraphics[width=0.37\textwidth]{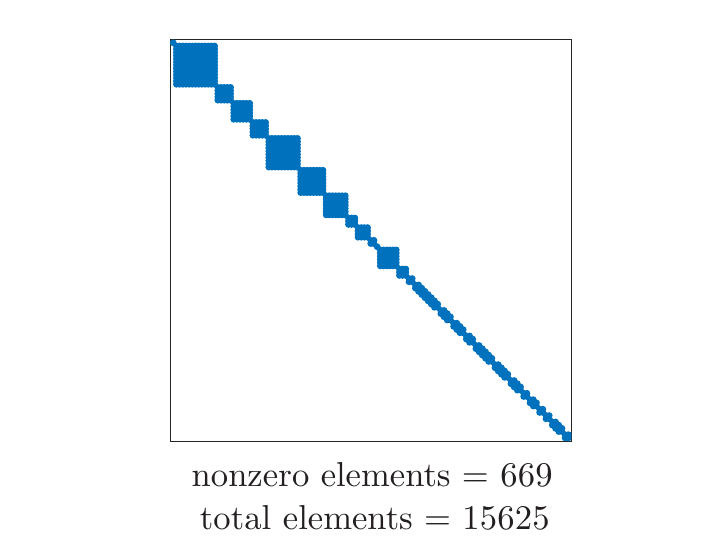}
	\caption{DFN10: Sparsity pattern of $\hat{\bm{G}}$ and of the preconditioners $\bm{P_{f}}$ and $\bm{P_d}$. Parameters: $\dhi=0.0011$, $\dlam=0.5$, $\dpsi=0.3$.}
	\label{spy}
\end{figure}

\subsection{Realistic DFN problem}

\begin{figure}
	\centering
	\includegraphics[width=0.65\linewidth]{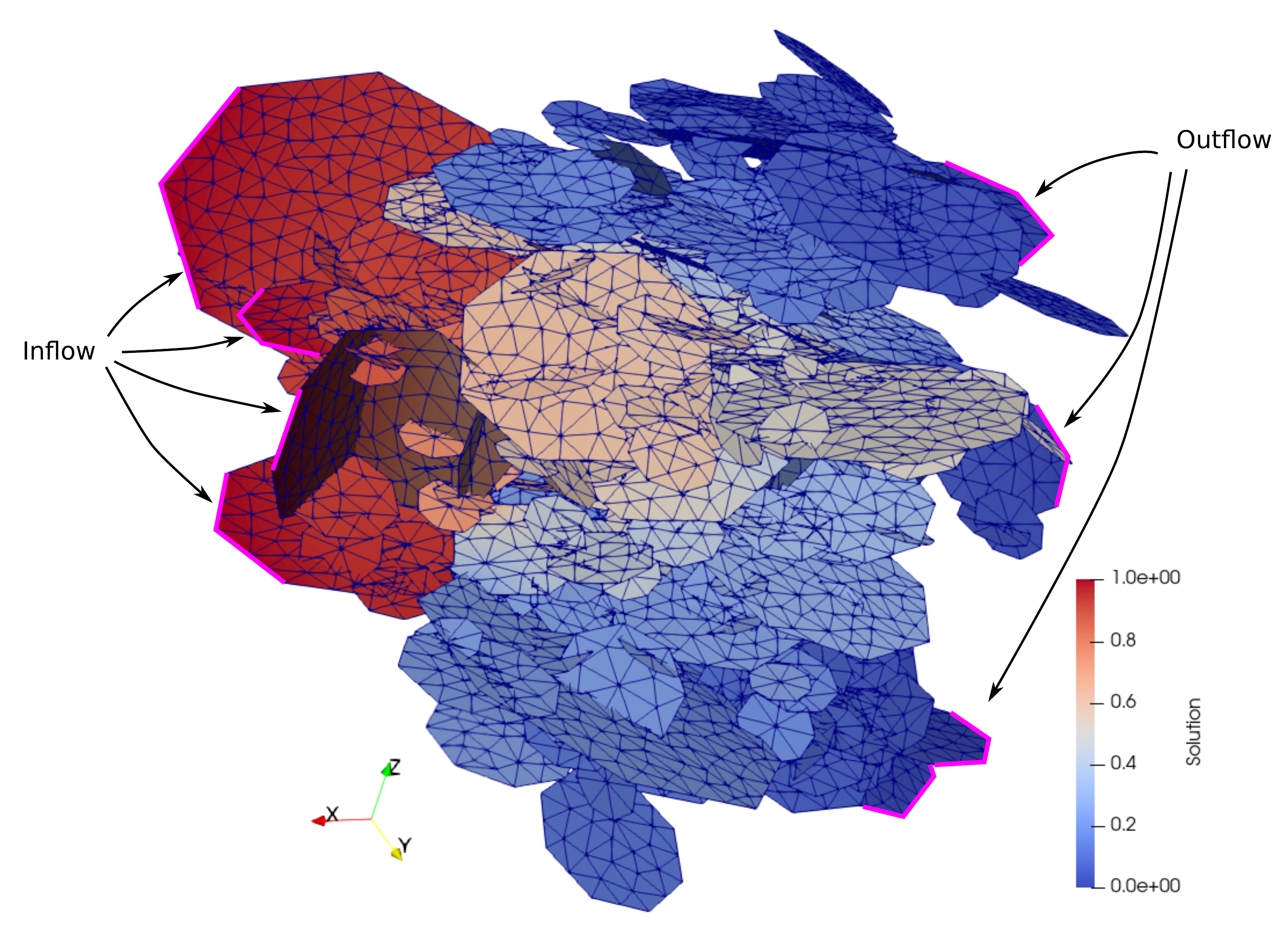}
	\caption{DFN395: Mesh configuration}
	\label{395paraview}
\end{figure}

As a last example, a DFN consisting of $395$ fractures and $629$ traces is considered, labeled DFN395. The DFN is obtained as a realization of probability distribution functions on fracture size, orientation, distribution and hydraulic transmissivity adapted from the data in \cite{DFNData}.
The network is shown in Figure~\ref{395paraview}, along with the inflow and outflow boundary, where Dirichlet  boundary conditions of $1$ and $0$, respectively, are set, all other fracture edges being, instead, insulated. Two simulations are performed with this geometry and boundary conditions: in a first case a uniform transmissivity equal to $\bm{K}=10^{-7}$ is chosen on all fractures, whereas, in a second case, different, constant transmissivity values are used on each fracture, extracted from a log-normal distribution having mean value of the logarithms equal to $\zeta=-5$ and variance $\frac13$.

Let us consider first the case of uniform transmissivity throughout the network: the small value of the transmissivity, compared to the order of magnitude of the hydraulic head, introduces an unbalance among the method's variables, and consequently a re-scaling of the problem is beneficial. This is achieved by introducing a scaling factor $\mathcal{K}$ and redefining the constraint equations of the optimization problem replacing transmissivity $\bm{K}$ by a re-scaled transmissivity $\bm{K}^\star$ given by $K^\star=\mathcal{K}\bm{K}$, thus obtaining a new problem equivalent to the original one in terms of the hydraulic head but having re-scaled fluxes. We refer to \cite{BPSd} for more details on the re-scaling, where this methodology has been proposed in a slightly different context.

Let us solve the re-scaled problem on a mesh with $\dhi=400$, $\dlam=0.5$ and $\dpsi=0.3$ for various values of the scaling factor in the range $10^7<\mathcal{K}<10^{12}$. Figure~\ref{res_iter_risc} shows, on the left, the effect of the scaling on the norm of the initial residual $r_0$ of the PCG method, split into the part relative to $\lambda$, termed $r_0^\lambda=(r_{0,k})_{k=1,\ldots,N_\Lambda}$ and the part relative to $\psi$, $r_0^\psi=(r_{0,k})_{k=N_\Lambda+1,\ldots,N_\Psi}$. Figure~\ref{res_iter_risc} displays instead, on the right, the number of iterations required to solve the problem to a non-preconditioned relative residual of $10^{-6}$, for the non preconditioned case and using preconditioner $\bm{P_f}$ and $\bm{P_d}$, varying $\mathcal{K}$. In Figure~\ref{res_iter_risc}, left, we can see that the initial residual norms become similar, i.e. $\|r_0^\lambda\|\approx \|r_0^\psi\|$, for a value of $\mathcal{K}\approx 10^{9}$. For the same value of $\mathcal{K}$ the number of iterations reaches a minimum as can be seen in Figure~\ref{res_iter_risc}, right. At the minimum, the number of iterations required to solve the problem using the preconditioners is reduced by a factor of about $3$ with respect to the non-preconditioned case, and the performances of preconditioners $\bm{P_f}$ and $\bm{P_d}$ are quite similar. Using preconditioner $\bm{P_f}$, the number of iterations for $\mathcal{K}>10^9$ remains almost fixed, whereas it increases with $\bm{P_d}$, even if of a smaller extent if compared to the non preconditioned case. Values of $\mathcal{K}$ much larger than the optimal should however be avoided as they are expected to increase the conditioning of the problem. 

A rough estimate of the optimal value of $\mathcal{K}$ can be obtained guessing the order of magnitude of the main flux $\phi$ throughout the network. For the present case, given the chosen boundary conditions, flux essentially occurs along the $x$-direction, say $\phi=\phi_x$, whose order of magnitude can be guessed as $\phi_x=\bm{K} \frac{\Delta_x h}{L_x}$, with $\Delta_x h$ equal to the hydraulic head difference along the $x$-direction, and $L_x$ the length of the DFN along $x$, $L_x\approx 1000$, giving $\phi_x\approx 10^{-10}$. As the hydraulic head varies between $1$ and $0$ it is to be expected that a value of $\mathcal{K}$ around $10^{10}$ or slightly less should be used to balance the two terms. 
%In more complex flow configurations non-isotropic re-scaling could be used. 

Similar results are obtained in the case of a different log-normally distributed transmissivities $\bm{K}_i$ among fractures $F_i$, $i\in \mathcal{J}$: in this case the order of magnitude of the flux through the network can be guessed as previously, setting $\bm{K}=10^\zeta$, where $\zeta$ is the mean value of the logarithms of $\bm{K}_i$, $i\in \mathcal{J}$, obtaining  $\phi_x\approx 10^{-8}$. The scaling factor is thus chosen equal to $\mathcal{K} \approx 10^7$ and used for the simulations. Table~\ref{preconditioners395_7} reports the number of iterations required by the PCG solver to reduce the relative residual norm to $10^{-6}$ without preconditioning and with the two preconditioners $\bm{P_f}$ and $\bm{P_d}$, for different values of $\dhi$, ranging between 1600 and 100, $\dlam=0.5$, $\dpsi=0.3$. The values of the two error indicators measuring continuity of the solution and global flux conservation are also reported in the last two columns. We can see that good performances are achieved by the two preconditioners which allow to reduce the number of iterations of a factor up to $5$ for preconditioner $\bm{P_f}$ and up to $3$ with preconditioner $\bm{P_d}$. Both error indicators can be reduced by refining the mesh.

	\begin{figure}
\begin{minipage}{.49\textwidth}
\centering
		\includegraphics[width=0.99\linewidth]{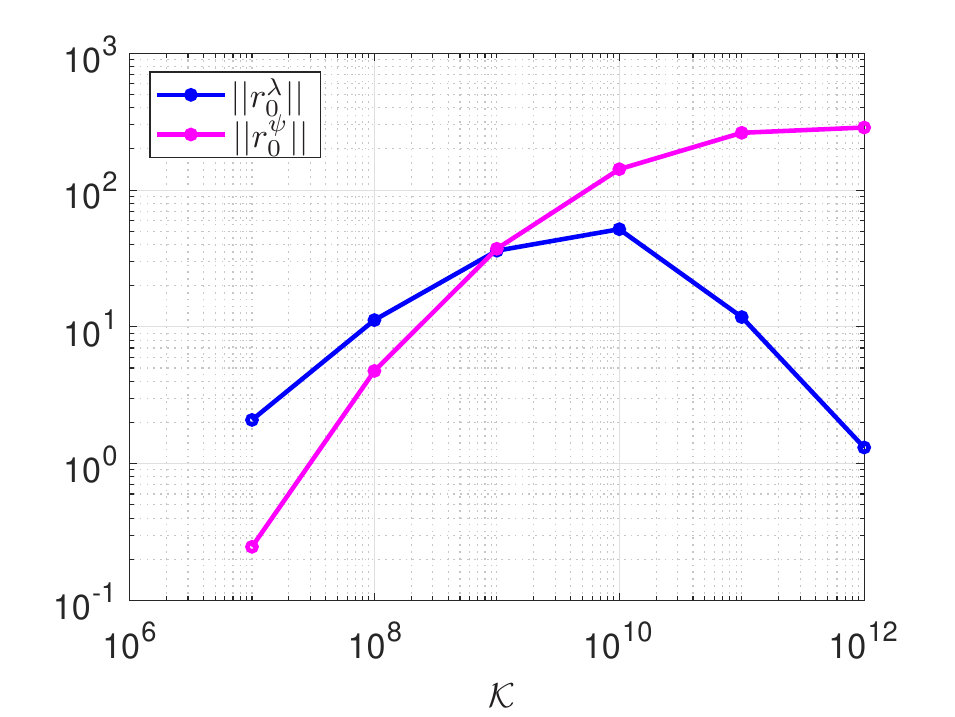}
\end{minipage}%\hspace{0.4cm}
\begin{minipage}[c]{.49\textwidth}
\centering
		\includegraphics[width=0.99\linewidth]{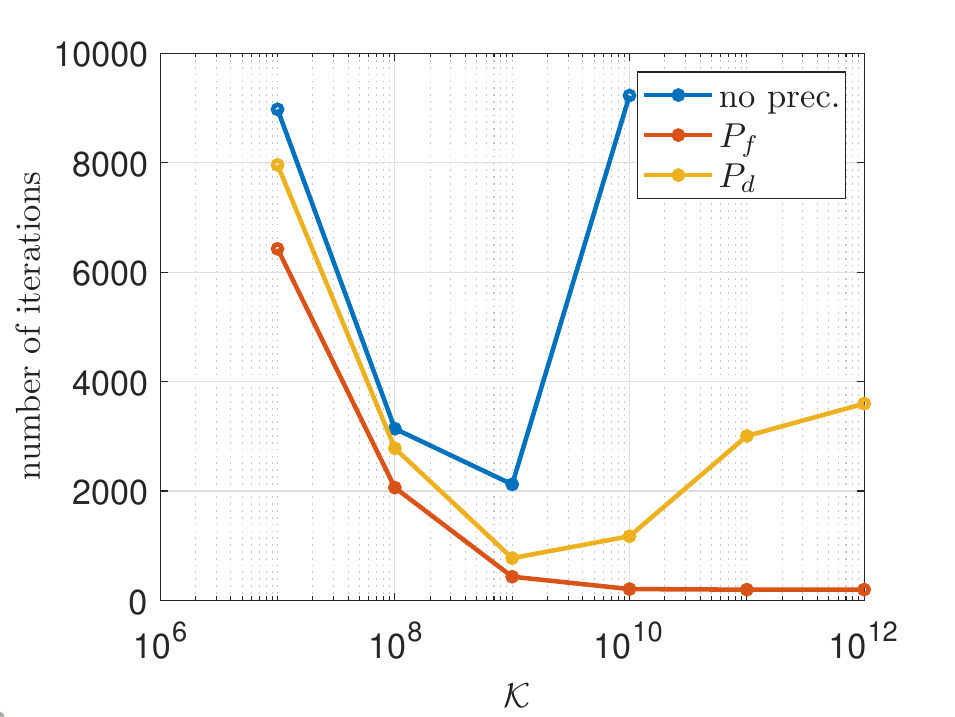}
\end{minipage}
\caption{DFN395: Residual norms $\|r_0^{\lambda}\|$ and $\|r_0^{\psi}\|$ (left) and number of iterations with different preconditioners (right) versus problem scaling. $\dhi=400$ $\dlam=0.5$, $\dpsi=0.3$.}
\label{res_iter_risc}
		\end{figure}

\begin{table}
	\begin{center}
		\caption{DFN395 random transmissivity: number of iterations of PCG algorithm and error indicators under mesh refinement and different preconditioning techniques. $\dlam=0.5$ and $\dpsi=0.3$.}
		\label{preconditioners395_7}
		\begin{tabular}{|c|c|c|c|c|c|c|}
			\cline{3-7} 
			 \multicolumn{1}{c}{~}& \multicolumn{1}{c|}{~} & \multicolumn{3}{c|} {number of iterations} &\multicolumn{2}{c|} {constraints} \\
			\hline
			$\dhi$ &  $N_{\Lambda}+N_{\psi}$ &\textbf{no prec.}&  $\bm{P_{f}}$ &  $\bm{P_{d}}$& $\Delta_\mathcal{S}^h$ &$\Delta^\phi_{\text{in-out}}$\\ 
			\hline 
			$1600$  &2267 & 2322 & 443 &  713 &0.0037  &0.1128\\ 
			\hline 
			$400$   &3606 & 1902 & 486 &  757 &0.0024 & 0.0474 \\ 
			\hline 
			$100$   & 6946 & 1727 & 502 & 847 & 0.0016 &0.0061\\ 
			\hline 
		\end{tabular} 
	\end{center}
\end{table}
\section{Conclusions}
\label{conclusions}
A new approach for flow simulations in geometrically complex fracture networks on non conforming meshes has been formulated and analysed. The method is based on the minimization of a cost functional expressing the error in continuity of the solution at fracture intersection, constrained by PDE equations on the fractures written in a three-field formulation. The resulting discrete problem is well posed independently of any mesh-related aspect, thus ensuring great flexibility to the method in handling arbitrarily complex networks. A solver based on the preconditioned conjugate gradient is designed for the method, ready for implementation on parallel computing architectures. The effect of mesh parameters on the performances of the method have been thoroughly investigated in the numerical example, along with the performances of preconditioning techniques. Local and global flux conservation properties and continuity of the solution at fracture intersections have also been analysed. The method has shown to be effective in solving the flow problem in stochastically generated networks.

\bibliographystyle{siam}
\bibliography{biblio}

%\begin{table}[H]
%	\begin{center}
%		\caption{\textbf{DFN395}: Number of iterations required with the use of different preconditioners and value of the quantifiers of the head's continuity and of the flux conservation's satisfaction for different values of $GP_{area}$. Variable \textbf{nu} between $10^{-6}$ and $10^{-4}$, with a scaling factor of $\bm{10^8}$. Other parameters: $\dlam=0.5$, $\dpsi=0.3$, $\lambda_{ord}=0$, $\psi_{ord}=1$.}
%		\label{preconditioners395_8}
%			\begin{tabular}{|c|c|c|c|c|c|}
%			\hline 
%			& \multicolumn{3}{|c|} {number of iterations} &\multicolumn{2}{|c|} {constraints} \\
%			\hline
%			&  \textbf{no prec.}&  $\bm{M_{full}}$ &  $\bm{M_{diag}}$& $\bm{h_{mism}}$ &$\bm{fl_{mism}}$ \\ 
%			\hline 
%			$\bm{GP_{area}=1600}$  & >5000 & 211 &  1115&0.8549 &0.0020\\ 
%			\hline 
%			$\bm{GP_{area}=400}$   & >5000 & 219 &  1238 & 0.5563 &5.8902e-4\\ 
%			\hline 
%			$\bm{GP_{area}=100}$   & >5000 & 230 & 1559 &0.3595 &6.8952e-5\\ 
%			\hline 
%		\end{tabular} 
%	\end{center}
%\end{table}

\end{document}